\newcommand\NoBlackBoxes{\global\overfullrule0pt}
\newcommand{\eps}{\varepsilon}
\newcommand{\N}{\mathbb{N}}
\renewcommand{\P}{\mathbb{P}}
\newcommand{\eee}{{\rm e}}
\let\serieslogo@\relax
\let\@setcopyright\relax
\theoremstyle{plain}
\newtheorem{theorem}{Theorem}[section]
\newtheorem{lemma}[theorem]{Lemma}
\newtheorem{corollary}[theorem]{Corollary}
\newtheorem{proposition}[theorem]{Proposition}
\theoremstyle{definition}
\theoremstyle{remark}
\newtheorem{remark}[theorem]{Remark}
\newtheorem{rem}[theorem]{Remark}
\renewcommand{\P}{{\mathbb{P}}}
\newcommand{\E}{{\mathbb{E}}}
\newcommand{\R}{{\mathbb{R}}}
\newcommand{\C}{\mathbb{C}}
\newcommand{\V}{\mathbb{V}}
\newcommand{\ind}{\operatorname{1}}
\renewcommand{\epsilon}{\varepsilon}
\renewcommand{\phi}{\varphi}
\numberwithin{equation}{section}
\begin{document}

\setcounter{page}{1}

\title[Fluctuations for Ising models on Erd\H{o}s-R\'enyi graphs]{Fluctuations of the Magnetization for Ising models
on Erd\H{o}s-R\'enyi random graphs -- the regimes of low temperature and external magnetic field}

\author[Zakhar Kabluchko]{Zakhar Kabluchko}
\address[Zakhar Kabluchko]{Fachbereich Mathematik und Informatik,
Universit\"at M\"unster,
Einsteinstra\ss e 62,
48149 M\"unster,
Germany}

\email[Zakhar Kabluchko]{zakhar.kabluchko@uni-muenster.de}

\author[Matthias L\"owe]{Matthias L\"owe}
\address[Matthias L\"owe]{Fachbereich Mathematik und Informatik,
Universit\"at M\"unster,
Einsteinstra\ss e 62,
48149 M\"unster,
Germany}

\email[Matthias L\"owe]{maloewe@math.uni-muenster.de}

\author[Kristina Schubert]{Kristina Schubert}
\address[Kristina Schubert]{ Fakult\"at f\"ur Mathematik, TU Dortmund, Vogelpothsweg 87, 44227 Dortmund,
Germany}

\email[Kristina Schubert]{kristina.schubert@tu-dortmund.de}


\date{\today}

\subjclass[2000]{Primary: 82B44; Secondary: 82B20}

\keywords{Ising model, dilute Curie-Weiss model, fluctuations, Central Limit Theorem, random graphs}

\newcommand{\wlim}{\mathop{\hbox{\rm w-lim}}}
\newcommand{\na}{{\mathbb N}}
\newcommand{\re}{{\mathbb R}}

\newcommand{\vep}{\varepsilon}

\begin{abstract}
We continue our analysis of Ising models on the (directed) Erd\H{o}s-R\'enyi random graph $G(N,p)$.  We
prove a quenched Central Limit Theorem for the magnetization and describe the fluctuations of the log-partition function. In the current note
we consider  the low temperature regime $\beta>1$ and the case when an external magnetic field is present.
In both cases, we assume that $p=p(N)$ satisfies $p^3N \to \infty$.
\end{abstract}

\maketitle

\section{Introduction and main results}
\subsection{Description of the model}
In this paper we continue our investigation of Ising models on the Erd\H{o}s-R\'enyi random graph.
Technically speaking they are disordered ferromagnets in the sense of Fr\"ohlich's lecture~\cite{froehlichlecture}.
The model we are studying was introduced and first analyzed by Bovier and Gayrard in \cite{BG93b}.
The topology of this model is given by a directed Erd\H{o}s-R\'enyi graph $G=G(N,p)$. Its vertex set is the set $\{1,
\ldots, N\}$, and each of the directed edges $(i,j)$ is realized with probability $p\in (0,1]$ independently of all other edges. For simplicity, the
case $i=j$ is admitted. The random variables $\vep_{i,j}^N=\vep_{i,j}, i,j \in \{1, \ldots, N\}$, which indicate whether an edge
$(i,j)$ is present or not, are thus assumed to be i.i.d.~with
distribution
\begin{equation*}
\P[\vep_{i,j} = 1] = p, \qquad \P[\vep_{i,j} = 0] = 1- p.
\end{equation*}
Our general assumption is that $p=p(N)$ satisfies
$p^3N \to \infty$ as $N\to\infty$.
This is more than enough to ensure that asymptotically almost surely the graph is connected.
It is likely that one could prove variants of our central results also under the weaker assumption $pN \to \infty$, but our main technique runs into problems.

On a fixed realization of this Erd\H{o}s-R\'enyi random graph $G$ we define the Hamiltonian or energy function of the Ising model. It is given by the function $$H= H_N: \{-1,+1\}^N \to\R$$ defined as
\begin{equation}\label{hamilCW}
H(\sigma) := - \frac 1 {2Np} \sum_{i,j=1}^N \vep_{i,j} \sigma_i \sigma_j-h \sum_{i=1}^N \sigma_i
\end{equation}
for $\sigma = (\sigma_1,\ldots,\sigma_N) \in \{-1,+1\}^N$. Here $h>0$ plays the role of an external magnetic field.
With such an energy function $H$ we associate a Gibbs measure on $\{-1,+1\}^N$. This is a random probability measure
with respect to the randomness encoded by the $(\vep_{i,j})_{i,j=1}^N$. It is given by
\begin{equation}\label{gibbs}
\mu_\beta (\sigma):=\frac 1 {Z_{N}(\beta)} \exp(-\beta H(\sigma)), \qquad \sigma \in \{-1,+1\}^N,
\end{equation}
where $\beta > 0$ is called the inverse temperature. The normalizing constant is the partition function given by
\begin{equation*}
Z_N(\beta):= \sum_{\sigma \in \{-1,+1\}^N} \exp(-\beta H(\sigma)).
\end{equation*}

The well studied Curie-Weiss model is a special case of this setup, namely the situation where $p\equiv 1$.
For a survey over many results on this model
see  \cite{Ellis-EntropyLargeDeviationsAndStatisticalMechanics}. The Curie-Weiss model is an
interesting model for ferromagnetism because it exhibits a phase transition at the critical temperature  $\beta_c=1$.
This phase transition can  be seen by analyzing
the  magnetization per particle
\begin{equation*}
m_N(\sigma):= \frac{\sum_{i=1}^N \sigma_i}{N}= \frac{|\sigma|}{N}.
\end{equation*}
Here we have introduced the notation
\begin{equation*}
|\sigma|:=\sum_{i=1}^N \sigma_i= N m_N(\sigma).
\end{equation*}
To avoid possible confusion, observe that $|\sigma|$ can be negative. In the Curie-Weiss model with $h=0$ the distribution of the magnetization per particle $m_N$ under the Gibbs measure converges
to
\begin{equation*}
\frac 12 (\delta_{m^+(\beta)}+\delta_{m^-(\beta)}).
\end{equation*}
Here $\delta_x$ is the Dirac-measure in a point $x$, while $m^+(\beta)$ is the largest  solution of
\begin{equation}\label{eq:CWeq}
z= \tanh(\beta z),
\end{equation}
and $m^-(\beta)=-m^+(\beta)$.
Observe that for $\beta\le 1$ the above equation \eqref{eq:CWeq} has only the trivial solution $m^+(\beta)=0$. Therefore in
the high temperature regime $\beta \le 1$ the magnetization per particle $m_N$ converges to $0$ in probability. For
$\beta>1$ the largest solution of \eqref{eq:CWeq} is strictly positive. Hence in the low temperature regime $\beta >
1$ the magnetization $m_N$ is asymptotically concentrated in two values, a positive one and a negative one.

Let us now turn to the Curie-Weiss model with $h >0$ and  $\beta >0$. Then, it is known~\cite{Ellis-EntropyLargeDeviationsAndStatisticalMechanics} that the distribution
of the magnetization per particle $m_N$ under the Gibbs measure
converges to
$\delta_{m^+(\beta,h)}$. Here $m^+(\beta,h)$ is the largest solution of
\begin{equation}\label{eq:CWeq_mit_h}
z= \tanh(\beta (z+h)).
\end{equation}

In \cite{BG93b} the authors show that the same limit theorems for $m_N$ hold in the
dilute Curie-Weiss Ising model, that we defined in \eqref{hamilCW} and \eqref{gibbs}, as long as $p N \to \infty$.
More concretely, if $\beta \le 1$ and $h=0$, then for
almost all realizations of the random graph, the magnetization $m_N$ converges to $0$ in probability under the Gibbs measure, while for $\beta >1$ and $h=0$ it converges to $\frac 12
(\delta_{m^+(\beta)}+\delta_{m^-(\beta)})$ in distribution. Moreover, for all $\beta>0$ and $h>0$,  $m_N$ again
converges in probability to $\delta_{m^+(\beta,h)}$. This latter fact is not explicitly stated in \cite{BG93b}, however, it can be
easily derived with the methods introduced there.

The starting point of our investigations in \cite{KaLS19a}, \cite{KaLS19c}, and \cite{KaLS19b}, as well as the
current note, is the observation that in the Curie-Weiss model one can also prove a Central Limit Theorem for the
magnetization when $\beta <1$ and $h=0$
(see, e.g.~\cite{Chatterjee_Shao}, \cite{EL10}, \cite{Ellis-EntropyLargeDeviationsAndStatisticalMechanics}, \cite{Ellis_Newman_78b}). These references show that $\sqrt N m_N$ converges in distribution to a normal random
variable with mean $0$ and variance $\frac 1 {1-\beta}$.
Furthermore, as can be expected from this result, at $\beta=1$, there is no such standard Central Limit Theorem and
one has to scale in a different way. The result is that $\sqrt[4] N m_N$ converges in distribution to a non-normal
random variable with density proportional to $\exp(-\frac 1 {12} x^4)$ with respect to the Lebesgue measure.
If $\beta >1$, one has to consider the conditional distribution of $\sqrt N (m_N-m^+(\beta))$ conditioned to $m_N$
being positive. In this case $\sqrt N (m_N-m^+(\beta))$ conditioned on $m_N >0$ converges in distribution to a
normal distribution with expectation $0$ and variance
\begin{equation}\label{eq:var_LT}
\sigma^2(\beta,0):=\sigma^2(\beta):= \frac{1-m^+(\beta)^2}{1-\beta(1-m^+(\beta)^2)}.
\end{equation}
Similarly, when $h>0$ the random variable
$\sqrt N (m_N-m^+(\beta,h))$ converges in distribution to a normal distribution with expectation $0$ and variance
\begin{equation}\label{eq:var_ext}
\sigma^2(\beta,h):= \frac{1-m^+(\beta,h)^2}{1-\beta(1-m^+(\beta,h)^2)}.
\end{equation}
These results can be found in \cite{EllisNewman_80}.

The general question we have been investigating in previous articles was, whether such limit theorems also hold in
the dilute setting introduced above.
To state our results let us introduce
the following {\it random} element of the space of finite measures on $\R$, denoted by $\mathcal M(\R)$:
\begin{equation}\label{eq:def_L_N}
L_N := \frac 1{Z_N(\beta)} \sum_{\sigma\in \{-1,+1\}^N} \eee^{-\beta H(\sigma)}  \delta_{\frac 1 {\sqrt N}
(\sum_{i=1}^N \sigma_i-Nm)},
\end{equation}
where $m$ is either $m^+(\beta)$ (resp. $m^-(\beta)$) or $m^+(\beta, h)$ depending on the case we consider).
Then the probability measure $L_N$ is random, since it depends on the random variables $\vep_{i,j}, i,j \in \{1, \ldots, N\}$.
In \cite{KaLS19a} we showed that, if $p^3 N^2 \to \infty$, $\beta <1$, and $h=0$ the random element $L_N$ converges
in probability to the normal distribution with mean 0 and variance $\frac{1}{1-\beta}$, denoted by $\mathfrak N_{0,
1/(1-\beta)}$, which is an element in $\mathcal M(\R)$.
In \cite{KaLS19b} we extended this result to the situation of $p N \to \infty$, $\beta <1$, and $h=0$. Moreover, for
$p^4 N^3 \to \infty$, $\beta =1$, and $h=0$
we considered
\begin{equation*}
L_N^1 := \frac 1{Z_N(\beta)} \sum_{\sigma\in \{-1,+1\}^N} \eee^{-\beta H(\sigma)}  \delta_{\frac 1 {N^{3/4}}
\sum_{i=1}^N \sigma_i}
\end{equation*}
and showed that it converges in probability to $\mathfrak{M} \in \mathcal M(\R)$, the probability measure with density
\begin{equation*}
\psi(x):=\frac{\eee^{-\frac 1 {12} x^4}}{\int_\R \eee^{-\frac 1 {12} y^4} dy}.
\end{equation*}
For smaller values of $p$, $\beta=1$, and $h=0$ we showed that suitable versions of $L_N^1$ again have a normal
distribution as limiting distribution (again in probability). Finally, in \cite{KaLS19c} we analyzed the fluctuations
of the random partition function $Z_N(\beta)$.

Note that the situation we analyzed in \cite{KaLS19a}, \cite{KaLS19c}, and\cite{KaLS19b}, as well as in the present
note is of a different character than the results for sparse (Erd\H{o}s-R\'enyi) graphs. Such situations were deeply
studied by Dembo and Montanari in \cite{Dembo_Montanari_2010b} and  \cite{Dembo_Montanari_2010a}and by van der
Hofstad and coauthors in
\cite{van_der_Hofstad_et_al_2015b},
\cite{van_der_Hofstad_et_al_2010},
\cite{van_der_Hofstad_et_al_2014},
\cite{van_der_Hofstad_et_al_2015c}, and
\cite{van_der_Hofstad_et_al_2015}.

\subsection{Main results}
As announced, in this note we will study the fluctuations of $m_N$, when either $h=0$ and $\beta>1$, or when $h>0$ and $\beta>0$ is arbitrary. In both cases we require that
$p$ is such that $p^3N \to \infty$.
The results will be formulated in terms of the quantity $L_N$ defined in \eqref{eq:def_L_N} and related quantities $L_N^+$ and $L_N^-$ to be defined below.
Recall that $L_N$ is a random element of $\mathcal M(\R)$, the set of finite measures on
$\R$
and that $\mathcal M (\R)$. We endow $\mathcal M(\R)$ with the
topology of weak convergence and denote by $\rho_{\text{weak}}$ any metric  generating the weak topology and turning $\mathcal M (\R)$ into a complete separable metric space.

Our main results are Central Limit Theorems for $m_N$. The first one is
\begin{theorem}\label{theo:b>1}
Assume that $h=0$, $\beta> 1$, and $p^3 N \to\infty$ as $N\to\infty$.
Then,
$$
L_N^+ := \frac {2} {Z_N(\beta)} \sum_{\sigma\in \{-1,+1\}^N} \eee^{-\beta H(\sigma)}  \delta_{\frac 1 {\sqrt N}
(\sum_{i=1}^N \sigma_i-Nm^+(\beta))} \ind_{\sum_{i=1}^N \sigma_i>0},
$$
considered as a random element of $\mathcal M(\R)$,
converges in probability to the normal distribution $\mathfrak N_{0,
\sigma^2(\beta)}$, which is considered as a deterministic element of $\mathcal M(\R)$.
Here the variance $\sigma^2(\beta)$ is given by \eqref{eq:var_LT}.
In other words, for every $\eps>0$,
$$
\lim_{N\to\infty} \P\left[\rho_{\text{weak}}\left(L_N^+, \mathfrak N_{0,\sigma^2(\beta)}\right)>\eps\right] = 0.
$$

An analogous assertion holds true, if, in the definition of $L_N^+$,
we replace $m^+(\beta)$ by $-m^+(\beta)=m^-(\beta)$ and
restrict our attention to configurations with negative magnetization, i.e.\ if we consider
$$
L_N^- := \frac 2{Z_N(\beta)} \sum_{\sigma\in \{-1,+1\}^N} \eee^{-\beta H(\sigma)}  \delta_{\frac 1 {\sqrt N}
(\sum_{i=1}^N \sigma_i-Nm^-(\beta))} \ind_{\sum_{i=1}^N \sigma_i\leq 0}.
$$
\end{theorem}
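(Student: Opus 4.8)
The plan is to reduce the statement, via the symmetry $\sigma\mapsto-\sigma$, to a purely positive-well computation, and then to show that the random dilution only contributes a multiplicative factor which is asymptotically flat across the relevant fluctuation window, so that the limit is governed by the deterministic Curie--Weiss weights. Since $\mathfrak N_{0,\sigma^2(\beta)}$ is deterministic and $(\mathcal M(\R),\rho_{\text{weak}})$ is Polish, it suffices to prove that $\langle L_N^+,f\rangle\toprobab\int f\,\dd\mathfrak N_{0,\sigma^2(\beta)}$ for every bounded continuous $f$ (the choice $f\equiv1$ taking care of the total mass). First I would split the Hamiltonian as
$$
-\beta H(\sigma)=\frac{\beta}{2N}|\sigma|^2+R_N(\sigma),\qquad R_N(\sigma):=\frac{\beta}{2Np}\sum_{i,j=1}^N(\vep_{i,j}-p)\sigma_i\sigma_j,
$$
isolating the Curie--Weiss energy $\frac{\beta}{2N}|\sigma|^2$ from the centred disorder $R_N$. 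Grouping configurations by the value $|\sigma|=k$ and writing $a_N(k):=\binom{N}{(N+k)/2}\exp(\frac{\beta}{2N}k^2)$ for the Curie--Weiss weights and $W_N(k)$ for the average of $e^{R_N(\sigma)}$ over the uniform measure on $\{\sigma:|\sigma|=k\}$, one has
$$
\langle L_N^+,f\rangle=\frac{2\sum_{k>0}a_N(k)\,W_N(k)\,f\!\big(\tfrac{k-Nm^+(\beta)}{\sqrt N}\big)}{\sum_{k}a_N(k)\,W_N(k)}.
$$

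Because $R_N(-\sigma)=R_N(\sigma)$ and $a_N(-k)=a_N(k)$, the map $\sigma\mapsto-\sigma$ gives $W_N(-k)=W_N(k)$, so the denominator equals $2\sum_{k>0}a_N(k)W_N(k)$ up to the negligible $k=0$ term; the factor $2$ in $L_N^+$ is exactly what makes the two expressions cancel, reducing everything to the positive well. The standard Laplace/local-CLT analysis of the deterministic weights $a_N(k)$ (the clean Curie--Weiss computation underlying \eqref{eq:var_LT}) shows that, with $k=Nm^+(\beta)+\sqrt N x$, $a_N(k)$ has the Gaussian profile $\propto\exp(-x^2/2\sigma^2(\beta))$ on scale $\sqrt N$ around $k^\ast:=Nm^+(\beta)$, with exponentially smaller mass away from $m^+(\beta)$; hence if $W_N(k)$ could be pulled out of both sums, the ratio would converge to $\int f\,\dd\mathfrak N_{0,\sigma^2(\beta)}$. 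The whole content of the theorem is therefore to justify replacing $W_N(k)$ by $W_N(k^\ast)$, i.e.\ to prove, with probability tending to one over the graph, that
$$
\max_{|k-k^\ast|\le\sqrt N\,t_N}\Big|\frac{W_N(k)}{W_N(k^\ast)}-1\Big|\toprobab0
$$
for a slowly growing $t_N\to\infty$, together with a tail estimate showing that $\sum_{|k-k^\ast|>\sqrt N t_N}a_N(k)W_N(k)$ is negligible against the bulk sum.

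The main obstacle, and the place where the hypothesis $p^3N\to\infty$ is decisive, is this flatness estimate for $W_N$. The subtlety is that $R_N(\sigma)$ is \emph{not} small: for fixed $\sigma$ it has disorder-variance $\frac{\beta^2(1-p)}{4p}$, and its fluctuation across a fixed magnetization slice is likewise of order $1/p$, so that $W_N(k)$ itself is of size $e^{O(1/p)}$ and its disorder-mean is inflated by rare edge configurations; one cannot simply compare $W_N(k)$ to $\E[W_N(k)]$. The point to exploit is that, after isolating the common mode $\frac{\beta}{2Np}m^2\sum_{i,j}(\vep_{i,j}-p)$, all large contributions to $\log W_N(k)$ depend on $k$ only through $m=k/N$ and vary smoothly in $m$; differentiating such an $O(1/p)$ quantity in $k$ and multiplying by the window width $\sqrt N\,t_N$ produces an increment of order $t_N/(\sqrt N\,p)$. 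Since $p^3N\to\infty$ forces $\sqrt N\,p\to\infty$ (indeed $Np^2\ge p^3N\to\infty$), this increment is $o(1)$ for a suitable $t_N\to\infty$, while the extra power of $p$ in $p^3N$ beyond $Np^2\to\infty$ controls the third- and higher-order cumulant corrections arising from the expansion of $\log\E_\vep[e^{\lambda g}]$ with $\lambda=\frac{\beta}{2Np}$. Making this rigorous is the technical heart: I would (i) expand $\log W_N(k)$ in cumulants of the Bernoulli edges and separate the slice-independent common mode from the genuinely configuration-dependent remainder; (ii) control the remainder's fluctuations over a slice and its increments between neighbouring slices by a coupling flipping $O(\sqrt N\,t_N)$ spins, using a bounded-differences / Efron--Stein concentration in the independent variables $(\vep_{i,j})$ to pass from annealed to quenched statements; and (iii) combine this flatness on the bulk window with the Gaussian decay of $a_N$ and a crude a priori bound $W_N(k)\le e^{o(N)}$ for the tail. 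Plugging the flatness into the displayed ratio and invoking the deterministic Curie--Weiss conditional limit then yields $\langle L_N^+,f\rangle\toprobab\int f\,\dd\mathfrak N_{0,\sigma^2(\beta)}$; the assertion for $L_N^-$ is identical after the reflection $\sigma\mapsto-\sigma$.
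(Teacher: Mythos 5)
Your overall architecture is closer to the paper's than you may realize: your ``common-mode removal'' of $\gamma m^2\sum_{i,j}(\vep_{i,j}-p)$ is exactly the paper's tilting $T(\sigma)=\exp\bigl(\gamma\sum_{i,j}\vep_{i,j}\sigma_i\sigma_j-\gamma m^2\sum_{i,j}\vep_{i,j}\bigr)$, which drops out of the ratio $Z_N^+(\beta,g)/Z_N(\beta)$, and your window calculus ($t_N\to\infty$ with $t_N^2p\to\infty$, increments of order $t_N/(\sqrt N p)$) matches the role of the paper's $\kappa_N=p\sqrt N/(p^3N)^{2/5}$; the symmetry/factor-$2$ reduction and the Laplace analysis of $a_N(k)$ are fine. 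The genuine gap is at what you yourself call the technical heart. The paper never proves, and never needs, your slice-uniform flatness $\max_{|k-k^\ast|\le\sqrt N t_N}\bigl|W_N(k)/W_N(k^\ast)-1\bigr|\toprobab 0$; it proves only the integrated statement $\tilde Z_N^+(\beta,g)/\E\tilde Z_N^+(\beta,g)\toprobab 1$ for each fixed test function $g$, by Chebyshev from the single second-moment estimate $\E[(\tilde Z_N^+(\beta,g))^2]\le(1+o(1))(\E\tilde Z_N^+(\beta,g))^2$. Your stronger claim is left unproven, and the tools you name do not deliver it as described: $\log W_N(k)$ has worst-case bounded differences of order $\gamma$ in each of the $N^2$ edge variables, so McDiarmid gives fluctuations of order $1/p\to\infty$, which is useless; and the coupling idea does not repair this, because which $O(\sqrt N\,t_N)$ spins are flipped depends on the random pair $(\sigma,\sigma')$ inside the slice averages rather than on a fixed set of coordinates, so the increment $\log W_N(k)-\log W_N(k^\ast)$ is not a function with small bounded differences in a fixed subset of the $\vep_{i,j}$. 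Most importantly, your sketch contains no mechanism for controlling pairs of configurations with \emph{atypical overlap} $|\sigma\tau|=\sum_i\sigma_i\tau_i$. This is where the paper's real work lies (Lemma \ref{CovT}, the hypergeometric formula \eqref{eq:condprob}, and the convexity argument for $I$ yielding the Gaussian suppression $\exp\bigl(-(n-kl/N)^2/(2(N-k^2/N))\bigr)$): the second-order coupling term $(p-p^2)\gamma^2|\sigma\tau|^2$ is of size $1/p$ and cancels only because the overlap concentrates near $Nm^2$; higher cumulants are not the issue, and proving the overlap concentration within your framework would essentially reproduce the paper's variance computation anyway.

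Two further points. First, your step (iii) as stated is false: there is no deterministic a priori bound $W_N(k)\le\eee^{o(N)}$, since $R_N(\sigma)$ can be as large as $\gamma\sum_{i,j}|\vep_{i,j}-p|\approx\beta(1-p)N$ and you cannot rule out a slice configuration aligning with the realized noise. This is fixable the way the paper handles $S_N^{1c}$: bound the tail in \emph{expectation} via Markov (and, in your scheme, a union bound over the $\le N+1$ values of $k$, giving $W_N(k)\le N^2\,\E W_N(k)=\eee^{O(1/p)}$ with high probability, which beats the $\eee^{-ct_N^2}$ decay of $a_N$ precisely because $t_N^2p\to\infty$). Second, your remark that ``one cannot simply compare $W_N(k)$ to $\E[W_N(k)]$'' is true only \emph{before} the common-mode removal; after it, comparison with the mean is exactly what works, and it is the reason the paper's soft second-moment route succeeds while bypassing every uniform-in-$k$ quenched statement you propose. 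As a roadmap your proposal is coherent and its numerology correctly locates where $p^3N\to\infty$ enters, but its central flatness lemma is asserted with a heuristic rather than proved, and the concentration technology invoked for it would fail as described.
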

\begin{remark}
Intuitively, approximately one half of the configurations are such that the magnetization per particle is
close to $m^+(\beta)$, whereas for the other half it is close to $m^-(\beta)$.
This is very different from the high-temperature setting considered in the previous publications \cite{KaLS19a} and \cite{KaLS19b}, where the
magnetization per particle was concentrated near $0$.
In $L_N^+$ we only  take the
configurations with positive magnetization into account, which is why a factor of $2$ is necessary in its definition. Without the factor, the limit would be a measure of total mass $1/2$.
\end{remark}
Our second main theorem is
\begin{theorem}\label{theo:clt_magn_field}
Assume  that $h>0,\beta >0$, and $p^3 N \to\infty$ as $N\to\infty$.
Then,
$$
L_N:= \frac {1} {Z_N(\beta)} \sum_{\sigma\in \{-1,+1\}^N} \eee^{-\beta H(\sigma)}  \delta_{\frac 1 {\sqrt N}
(\sum_{i=1}^N \sigma_i-Nm^+(\beta,h))},
$$
considered as a random element of $\mathcal M(\R)$, converges in probability to $\mathfrak N_{0,
\sigma^2(\beta,h)}$. Here $m^+(\beta,h)$ and $\sigma^2(\beta,h)$ are given by~\eqref{eq:CWeq_mit_h} and~\eqref{eq:var_ext}.
In other words, for every $\eps>0$,
$$
\lim_{N\to\infty} \P[\rho_{\text{weak}}(L_N, \mathfrak N_{0,\sigma^2(\beta,h)})>\eps] = 0.
$$
\end{theorem}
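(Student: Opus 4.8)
The plan is to reduce the statement to convergence in probability of characteristic functions, and then to a Laplace-type analysis of the Curie--Weiss free energy at its unique maximiser $m^+(\beta,h)$, with the random-graph term treated as a perturbation that cancels in the relevant ratio. By the convergence-determining criterion used in \cite{KaLS19a} (it suffices to test $L_N$ against $x\mapsto\eee^{\mathrm{i}tx}$ for $t$ in a countable dense set, noting that $L_N$ always has total mass one), the theorem follows once we show that for each fixed $t\in\R$
\[
\widehat{L_N}(t):=\frac{1}{Z_N(\beta)}\sum_{\sigma}\eee^{-\beta H(\sigma)}\exp\!\Big(\mathrm{i}t\,\tfrac{|\sigma|-Nm^+(\beta,h)}{\sqrt N}\Big)\toprobab \eee^{-\frac12 t^2\sigma^2(\beta,h)}.
\]

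First I would split the Hamiltonian. Writing $\vep_{i,j}=p+(\vep_{i,j}-p)$ gives $-\beta H(\sigma)=\frac{\beta}{2N}|\sigma|^2+\beta h|\sigma|+\lambda\,W(\sigma)+c_N$, where $\lambda:=\frac{\beta}{2Np}$, $W(\sigma):=\sum_{i\neq j}(\vep_{i,j}-p)\sigma_i\sigma_j$, and $c_N$ is independent of $\sigma$ and cancels in $\widehat{L_N}$. Grouping configurations by the value $S=|\sigma|$, the quantity $\widehat{L_N}(t)$ becomes a ratio of sums over $S$ of the deterministic Curie--Weiss weight $\eee^{\frac{\beta}{2N}S^2+\beta hS}$ times the random slice sum $Q_N(S):=\sum_{|\sigma|=S}\eee^{\lambda W(\sigma)}$. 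Since the operator-norm bound on $(\vep_{i,j}-p)_{i\neq j}$ makes $\lambda W(\sigma)$ uniformly $o(N)$ in the exponent under $p^3N\to\infty$, a large-deviations estimate for the Curie--Weiss weight (here $m^+(\beta,h)$ is the unique global maximiser of $\Phi(m):=\frac{\beta}{2}m^2+\beta hm+\log 2-I(m)$, with $I$ the binary entropy and $\Phi''(m^+)=\beta-\frac{1}{1-(m^+)^2}<0$) lets me restrict both sums to the window $|S-Nm^+(\beta,h)|\le \sqrt N\log N$, the complement contributing relatively exponentially small mass. On the window I substitute $S=Nm^+(\beta,h)+\sqrt N\,x$ and Taylor-expand the Curie--Weiss weight together with $\log\binom{N}{(N+S)/2}$; since $\Phi'(m^+)=0$ the linear term drops and one is left with the Gaussian factor $\eee^{\frac12\Phi''(m^+)x^2}=\eee^{-x^2/(2\sigma^2(\beta,h))}$, which reproduces exactly the variance in~\eqref{eq:var_ext}.

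Everything then hinges on the random factor $Q_N(S)$, equivalently on $g_N(S):=\log\big(Q_N(S)/\binom{N}{(N+S)/2}\big)$, the logarithm of the uniform average of $\eee^{\lambda W}$ over the slice. The annealed version is explicit: because the $\vep_{i,j}$ are i.i.d., $\E[\eee^{\lambda W(\sigma)}]$ depends on $\sigma$ only through $S$ and equals $K\exp\big(\tfrac{\lambda^3}{6}\kappa_3(S^2-N)+O(N^2\lambda^4)\big)$ with $K$ independent of $S$ and $\kappa_3=p(1-p)(1-2p)$. The $S$-dependent part is $O(1/(Np^2))$ at the centre and its variation over the $\sqrt N$-window is $o(1)$, so annealed it is asymptotically constant in $S$ and cancels in the ratio, leaving precisely the Curie--Weiss computation of the previous paragraph.

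The hard part, and the place where the hypothesis $p^3N\to\infty$ enters, will be the quenched control of the fluctuations of $g_N$: I must show $g_N(S)-g_N(Nm^+(\beta,h))\toprobab 0$ uniformly over the window. A naive second-moment bound on the unnormalised numerator fails, because the cross term produced by two independent copies $\sigma,\tau$ carries a factor $\lambda^2 p(1-p)\,\langle\sigma,\tau\rangle^2$, which at the typical overlap $\langle\sigma,\tau\rangle\approx N(m^+)^2$ is of order $1/p$ and blows up as $p\to0$; this reflects a genuine graph-dependent global tilt that is common to numerator and denominator, so that $\mathrm{Var}(\log Q_N(S))$ itself is of order $1/p$. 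The resolution is to work with the ratio, i.e.\ with the differences $g_N(S)-g_N(S')$, where this common tilt cancels: a joint second-moment computation shows that $\mathrm{Cov}(\log Q_N(S),\log Q_N(S'))$ agrees with the variance up to terms depending on $|S-S'|$, so the leading $1/p$ contribution drops out of the difference and one obtains $g_N(S)-g_N(Nm^+(\beta,h))\toLtwoprobab 0$ on the window, the surviving error being governed by the higher-order expansion term that vanishes under $p^3N\to\infty$. Granting this, $\pi_N(S)$ is asymptotically proportional to the pure Curie--Weiss weights on the window, and the Laplace computation yields the Gaussian limit with variance $\sigma^2(\beta,h)$, as in~\cite{EllisNewman_80}, completing the proof.
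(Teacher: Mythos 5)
Your reduction to characteristic functions, the splitting $\vep_{i,j}=p+(\vep_{i,j}-p)$, the annealed expansion of $\E\,\eee^{\lambda W(\sigma)}$ (which matches what the paper extracts from $F(p,z)=\log(1-p+p\eee^z)$ in Lemma \ref{taylor} and Lemma \ref{ET}), and the Laplace analysis at $m^+(\beta,h)$ with the fixed-point cancellation of the linear term via $m^+(\beta,h)=\tanh(\beta(m^+(\beta,h)+h))$ all run parallel to the paper's Sections 3--4. You also correctly diagnose the central obstruction: the graph carries a global tilt whose log-variance is of order $1/p$, so a naive second moment of the slice sums is useless. But your resolution of this obstruction has a genuine gap. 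You propose to show that $\Cov\left(\log Q_N(S),\log Q_N(S')\right)$ agrees with $\Var\left(\log Q_N(S)\right)$ up to controllable terms ``by a joint second-moment computation.'' Second-moment computations give you $\E\left[Q_N(S)\,Q_N(S')\right]$, not covariances of \emph{logarithms}; passing from moments of $Q_N$ to moments of $\log Q_N$ presupposes exactly the concentration you are trying to establish, so the step is circular as stated. The paper sidesteps this entirely by making the cancellation of the tilt exact and multiplicative rather than asymptotic and logarithmic: it works with $T(\sigma)$ carrying the explicit $\sigma$-independent factor $\exp\left(-\gamma m^2\sum_{i,j}\vep_{i,j}\right)$, so that this factor drops out of the ratio $Z_N(\beta,g)/Z_N(\beta)=\tilde Z_N(\beta,g)/\tilde Z_N(\beta)$ \emph{identically}, and then a plain variance bound $\V(\tilde Z_N(\beta,g))=o\left(\E^2[\tilde Z_N(\beta,g)]\right)$ plus Chebyshev and Slutsky finishes the proof; no statement about $\log Q_N(S)$ for individual slices is ever needed.

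Even granting the normalization, your sketch omits the technical heart of the quenched step: the overlap control. After normalizing, $\E(T(\sigma)T(\tau))$ still contains the cross factor $\exp\left((p-p^2)\gamma^2|\sigma\tau|^2\right)$ (in your notation $\langle\sigma,\tau\rangle=|\sigma\tau|$), which for atypical overlaps can be as large as $\eee^{c/p}$, and one must show that pairs with $\left|\,|\sigma\tau|-Nm^2\right|>C'\sqrt N\kappa_N$ contribute negligibly to the second moment. The paper does this by expressing the number of pairs with prescribed $(|\sigma|,|\tau|,|\sigma\tau|)=(k,l,n)$ through the hypergeometric distribution \eqref{eq:condprob}, applying Stirling bounds, and exploiting convexity of the entropy $I$ to extract a Gaussian factor $\exp\left(-\frac{(n-kl/N)^2}{2(N-k^2/N)}\right)$ that beats the $\eee^{K/p}$ blow-up precisely because $p\kappa_N^2=(p^3N)^{1/5}\to\infty$ --- this is where the hypothesis $p^3N\to\infty$ genuinely enters, whereas in your proposal it is invoked only vaguely (``the surviving error \dots vanishes under $p^3N\to\infty$''). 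Without a counterpart of this overlap estimate, the claimed $L^2$-concentration on the window cannot be established, so the proposal as written does not close.
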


In \cite{KaLS19c} we also analyzed the fluctuations of $Z_N(\beta, h)$ for $\beta <1$, $h=0$. Before we state  the corresponding result for $h=0$, $\beta>1$ and for $h>0, \beta>0$ we introduce some notation.
We set
\begin{equation*}
m := \begin{cases} m^+(\beta)& \text{if } h=0, \beta>1  \\ m^+(\beta, h) & \text{if } h>0, \beta>0 \end{cases}
\end{equation*}
and
\begin{equation}\label{tildeZN(g)_2}
\tilde Z_N(\beta):= \sum_{\sigma \in \{-1, +1\}^N} \eee^{-\beta H(\sigma)}\exp\left( -\frac{\beta}{2 Np} m^2 \sum_{i,j=1}^N \vep_{i,j} \right).
\end{equation}
We comment on the role of $\tilde Z_N(\beta)$ at the beginning of Section \ref{sec_proof_th_1-1}. In the following theorem, it is possible to replace $\E \tilde Z_N(\beta)$ by its asymptotic forms given in~\eqref{eq:E_tilde_Z_N} below  and~\eqref{eq:E_tilde_asympt_magnetic}, but we prefer not to state these long expressions explicitly.
\begin{theorem}\label{theo:fluc_part}
Assume  that $h=0$, $\beta >1$ or $h>0$, $\beta>0$. Further assume  that $p=p(N)$ is such that $p^3 N \to\infty$ as $N\to\infty$ and, moreover,
that $p(N)$ is bounded away from $1$.
Then,
\begin{equation*}
\frac{\log (Z_N(\beta)/\E \tilde Z_N(\beta) )-\frac{\beta N m^2}2}{\sqrt{\frac{\beta^2m^4(1-p)}{4p}}} \to \mathfrak N_{0,1}
\end{equation*}
in distribution.

For fixed $0<p<1$ that does not depend on $N$ this may be rewritten as
\begin{equation*}
\frac{Z_N(\beta)/\E \tilde Z_N(\beta)}{\eee^{\frac{\beta N m^2}2}}\to \eee^{\zeta}
\end{equation*}
in distribution. Here $\zeta$ denotes a normal random variable with expectation $0$ and variance $\frac{\beta^2m^4(1-p)}{4p}$.
\end{theorem}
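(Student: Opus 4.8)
The plan is to exploit the exact multiplicative relation between $Z_N$ and $\tilde Z_N$, reducing the fluctuations of the log-partition function to those of a single binomial sum, and then to prove that $\tilde Z_N$ concentrates around its mean so sharply that it contributes nothing to the limit. Writing $S_N := \sum_{i,j=1}^N \vep_{i,j}$ and recalling \eqref{hamilCW} and \eqref{tildeZN(g)_2}, every summand of $Z_N$ equals the corresponding summand of $\tilde Z_N$ times the common factor $\exp(\frac{\beta m^2}{2Np} S_N)$, so that
\begin{equation*}
Z_N(\beta) = \tilde Z_N(\beta)\,\exp\!\left(\frac{\beta m^2}{2Np}S_N\right),
\end{equation*}
and consequently
\begin{equation*}
\log\frac{Z_N(\beta)}{\E\tilde Z_N(\beta)} - \frac{\beta N m^2}{2}
= \left(\frac{\beta m^2}{2Np}S_N - \frac{\beta N m^2}{2}\right) + \log\frac{\tilde Z_N(\beta)}{\E\tilde Z_N(\beta)}.
\end{equation*}
The point of this splitting is that the first bracket is an explicit, \emph{configuration-free} functional of the random graph, while the second term will turn out to be asymptotically negligible.

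For the first term I would invoke the central limit theorem for triangular arrays. Since $S_N$ is a sum of $N^2$ i.i.d.\ Bernoulli($p$) variables, $\E S_N = pN^2$ and $\Var S_N = N^2 p(1-p)$, whence $\frac{\beta m^2}{2Np}S_N$ has mean exactly $\frac{\beta N m^2}{2}$ and variance exactly $\frac{\beta^2 m^4(1-p)}{4p}$ --- precisely the centering and scaling in the statement. Dividing by $\sqrt{\beta^2 m^4(1-p)/(4p)}$ turns the first bracket into the standardized binomial $(S_N - pN^2)/\sqrt{N^2 p(1-p)}$. Because $p^3 N\to\infty$ forces $p\gg N^{-1/3}$ and $p$ is bounded away from $1$, we have $N^2 p(1-p)\to\infty$, so the Lindeberg condition holds and this standardized sum converges in distribution to $\mathfrak N_{0,1}$.

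The crux of the argument --- and the step I expect to be the main obstacle --- is showing that $\tilde Z_N(\beta)/\E\tilde Z_N(\beta)\toprobab 1$, for then $\log(\tilde Z_N/\E\tilde Z_N)\toprobab 0$ and, since $\beta^2 m^4(1-p)/(4p)$ stays bounded away from $0$ (as $m>0$ and $p$ is bounded away from $1$), the rescaled second term vanishes in probability. I would establish concentration through a second-moment estimate, proving $\Var(\tilde Z_N)/(\E\tilde Z_N)^2\to 0$ and applying Chebyshev. Introducing two independent replicas $\sigma,\tau$ and using independence of the edges, the ratio $\E[\tilde Z_N^2]/(\E\tilde Z_N)^2$ factorizes over the $N^2$ pairs $(i,j)$ into terms $\E[\eee^{(a_{ij}+b_{ij})\vep_{ij}}]/(\E[\eee^{a_{ij}\vep_{ij}}]\,\E[\eee^{b_{ij}\vep_{ij}}])$ with $a_{ij}=\frac{\beta}{2Np}(\sigma_i\sigma_j-m^2)$ and $b_{ij}=\frac{\beta}{2Np}(\tau_i\tau_j-m^2)$. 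A second-order cumulant expansion (legitimate because these exponents are $O(1/(Np))$) produces a correction governed by $p(1-p)\sum_{i,j}a_{ij}b_{ij}=\frac{\beta^2(1-p)}{4N^2 p}\sum_{i,j}(\sigma_i\sigma_j-m^2)(\tau_i\tau_j-m^2)$. Here the subtraction of $m^2$ in the definition of $\tilde Z_N$ is decisive: writing $\langle\sigma,\tau\rangle := \sum_i\sigma_i\tau_i$, one has $\sum_{i,j}(\sigma_i\sigma_j-m^2)(\tau_i\tau_j-m^2)=\langle\sigma,\tau\rangle^2 - m^2|\sigma|^2 - m^2|\tau|^2 + N^2 m^4$, and for the configurations that dominate the sum --- those with magnetization near $m$, as quantified by the Gaussian localization underlying Theorems~\ref{theo:b>1} and~\ref{theo:clt_magn_field} --- this overlap combination collapses from the naive order $N^2$ down to order $N$, so the correction is of order $\frac{1-p}{Np}\to 0$ under $p^3 N\to\infty$. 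The technical work lies in making this cancellation uniform, i.e.\ controlling the contribution of atypical configurations (for instance via a large-deviation/Laplace bound for the Curie-Weiss weight) so that the replica sum really is dominated by the near-$m$ regime.

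Finally I would assemble the pieces: the first term converges in distribution to $\mathfrak N_{0,1}$ and the second converges to $0$ in probability, so Slutsky's theorem yields the stated convergence. For the reformulation at fixed $p\in(0,1)$ the normalizer is a constant, so $\log(Z_N/\E\tilde Z_N)-\frac{\beta N m^2}{2}\todistr\zeta\sim\mathfrak N_{0,\beta^2 m^4(1-p)/(4p)}$, and exponentiating via the continuous mapping theorem gives $(Z_N/\E\tilde Z_N)\,\eee^{-\beta N m^2/2}\todistr\eee^{\zeta}$.
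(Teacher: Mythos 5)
Your proposal is correct and follows essentially the same route as the paper: the exact factorization $Z_N(\beta)=\tilde Z_N(\beta)\exp\bigl(\gamma m^2\sum_{i,j}\vep_{i,j}\bigr)$ from \eqref{eq:relation_Z_tilde_and_Z}, the classical CLT for the standardized Bernoulli sum (valid since $N^2p(1-p)\to\infty$), and Slutsky's theorem, with the log-term killed because the normalizer is bounded away from $0$ when $p$ is bounded away from $1$. The concentration $\tilde Z_N(\beta)/\E\tilde Z_N(\beta)\toprobab 1$ that you sketch via the replica second-moment computation is exactly what the paper establishes in Propositions \ref{prop:expect} and \ref{prop:var} (and their $h>0$ analogues) and then merely cites in the proof of Theorem \ref{theo:fluc_part} via \eqref{eq:in_probab_noplus} and \eqref{eq:111}, including your flagged cancellation through the $b_{12}|\sigma\tau|^2$ overlap term and the typical/atypical splitting.
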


We will prepare the proof of these theorems analytically in the following section. The actual proofs will follow in
Sections 3--5.

\begin{rem}
While we were working on this manuscript Deb and Mukherjee \cite{deb_mukherjee} published some really interesting results on the fluctuations of the magnetization of Ising models on general almost regular graphs. Their results partially confirm our results in 
\cite{KaLS19a} and \cite{KaLS19b}, as well as those of Theorems \ref{theo:b>1} and \ref{theo:clt_magn_field}. However, their techniques are completely different from ours. On the other hand, our third main result, Theorem \ref{theo:fluc_part}, is new. Its proof relies on the results developed in order to prove Theorems \ref{theo:b>1} and \ref{theo:clt_magn_field}.  
\end{rem}

\section{Technical preparation}
In the proof of our main theorems we will encounter some functions for which we will need an expansion up to certain
orders. These functions will be studied and analyzed in this section.

More precisely, for arbitrary complex variables $z$ and $p$ let
us define the function
\begin{equation}\label{eq:centralfunc}
F(p, z) := \log (1 - p + p\eee^{z}).
\end{equation}
Note that in this section we do not assume that $p$ is a probability.
We will next compute the power series expansion of some linear combinations of $F(p,z)$
in $p$ and $z$ variables around the origin $(0,0)$.

Note that, for $|p|< 2$ and $|z|< z_0$ with sufficiently small $z_0>0$, we can estimate that
$$
|p(\eee^z-1)| < 1.
$$
Thus, the function $F(p, z)$ is an analytic function of two complex variables $p$ and $z$ on the domain
$$
\mathcal D = \{(p,z)\in\C^2\colon |p| < 2, |z|<z_0\}.
$$
Therefore, it has a power series expansion  which converges uniformly and absolutely on compact subsets of this
domain. In particular, by absolute convergence, we can re-arrange and re-group the terms arbitrarily.
We will use the following first terms of the power series expansion in our computations:
\begin{equation}\label{eq:expansion}
F(p, z) = p z + \frac{p (1-p)}2 z^2+ \frac{ p(2p^2-3p+1)}6 z^3+ \frac {p(-6p^3+12p^2-7p+1)}{24}z^4+
\mathcal{O}(z^5).
\end{equation}
The following claim has already been proven in \cite{KaLS19a}, Lemma 1. 
\begin{lemma}\label{taylor}
\begin{equation*}
F(p, z)=  p \sum_{k=1}^{\infty} \frac{P_k(p)}{k!} z^k,
\end{equation*}
where  $P_k(p)$ is a power series in $p$ with constant term $P_k(0)=1$ for all $k\in\N$.
\end{lemma}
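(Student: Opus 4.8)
The plan is to substitute $w = p(\eee^z - 1)$ into the elementary logarithmic series and then reorganize the result as a power series in $z$ whose coefficients are polynomials (hence power series) in $p$. First I would write $F(p,z) = \log\bigl(1 + p(\eee^z-1)\bigr)$ and observe that on the domain $\mathcal D$ we have $|p(\eee^z-1)| < 1$, which is exactly the estimate recorded just before the statement. Consequently
$$
F(p,z) = \sum_{n=1}^\infty \frac{(-1)^{n-1}}{n}\, p^n\, (\eee^z - 1)^n
$$
converges absolutely and uniformly on compact subsets of $\mathcal D$. This absolute convergence is what will license all the rearrangements below.

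Next I would expand each factor $(\eee^z - 1)^n = \bigl(\sum_{j\ge 1} z^j/j!\bigr)^n$ as a power series in $z$. The crucial observation is that $(\eee^z-1)^n$ has lowest-order term $z^n$, so it contributes to the coefficient of $z^k$ only when $n \le k$. Collecting the coefficient of $z^k$ in the double series therefore yields a \emph{finite} sum
$$
[z^k]\,F(p,z) = \sum_{n=1}^{k} \frac{(-1)^{n-1}}{n}\,\bigl([z^k](\eee^z-1)^n\bigr)\, p^n,
$$
which is a polynomial in $p$ of degree $k$. Since every summand carries a factor $p^n$ with $n \ge 1$, this polynomial is divisible by $p$, and writing $[z^k]F(p,z) = \tfrac{p}{k!}\,P_k(p)$ defines $P_k$ as a polynomial in $p$. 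This establishes the claimed shape $F(p,z) = p\sum_{k\ge 1} \tfrac{P_k(p)}{k!}\, z^k$.

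Finally I would read off the constant term. The only contribution to the lowest power $p^1$ comes from $n=1$, namely $\frac{(-1)^0}{1}\,[z^k](\eee^z-1)\,p = \frac{p}{k!}$; all terms with $n\ge 2$ are $\mathcal O(p^2)$. Hence $\tfrac{p}{k!}P_k(p) = \tfrac{p}{k!} + \mathcal O(p^2)$, which forces $P_k(0) = 1$ for every $k$, as asserted. As a sanity check one can match this against the explicit low-order terms in \eqref{eq:expansion}: the $z^2$-coefficient $\tfrac{p(1-p)}{2}$ gives $P_2(p) = 1-p$, the $z^3$-coefficient gives $P_3(p) = 2p^2 - 3p + 1$, and both indeed equal $1$ at $p=0$.

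The only genuine subtlety is the interchange of the two summations, over $n$ in the logarithmic series and over $j$ inside each power of $\eee^z-1$. This is not a real obstacle here: it is justified precisely by the absolute and uniform convergence on compact subsets of $\mathcal D$ already established in the excerpt, which permits the terms to be regrouped arbitrarily. Everything else is elementary bookkeeping with formal power series.
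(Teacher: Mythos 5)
Your proof is correct. Note that this paper contains no proof of the lemma at all: it is quoted from Lemma~1 of \cite{KaLS19a}, so there is no internal argument to compare yours against, and your derivation is exactly the natural self-contained one. Substituting $w=p(\eee^z-1)$ into $\log(1+w)$, using that $(\eee^z-1)^n$ starts at order $z^n$ so that $[z^k]F(p,z)=\sum_{n=1}^{k}\frac{(-1)^{n-1}}{n}\bigl([z^k](\eee^z-1)^n\bigr)p^n$ is a finite sum, and isolating the $n=1$ term to get $P_k(0)=1$ is sound, and in fact proves slightly more than stated: each $P_k$ is a polynomial in $p$ of degree at most $k-1$ (explicitly $P_k(p)=\sum_{n=1}^k(-1)^{n-1}(n-1)!\,S(k,n)\,p^{n-1}$ with $S(k,n)$ the Stirling numbers of the second kind), consistent with $P_2=1-p$, $P_3=2p^2-3p+1$, $P_4=-6p^3+12p^2-7p+1$ from \eqref{eq:expansion}. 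One small refinement on the rearrangement step: rather than invoking the absolute convergence of the Taylor expansion of $F$ itself (a slightly different object), the interchange of the sum over $n$ with the expansion of each $(\eee^z-1)^n$ is most cleanly licensed by the positive majorant obtained by replacing every term with its absolute value, namely $\sum_{n\ge 1}\frac{1}{n}\,|p|^n(\eee^{|z|}-1)^n$, which converges on $\mathcal D$ once $z_0$ is small enough that $2(\eee^{z_0}-1)<1$; since the monomial coefficients of $(\eee^{|z|}-1)^n$ are nonnegative, Tonelli's theorem for double series then justifies the regrouping. This is the same idea you appeal to, just pinned to the correct series.
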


\begin{corollary}\label{coro_coefficients}
If $z$ and $y$ range in a compact subset of $\mathbb R$ and $p,\gamma$ are such that $(p,\gamma (\pm z+y)) \in \mathcal D$, then
\begin{enumerate}
\item[(i)]
$$\frac{F(p,\gamma (z+y))+F(p,\gamma (-z+y))}{2}= p\gamma y + \frac{1}{2} p(1-p) \gamma^2 (z^2+y^2) + \mathcal O(p\gamma^3),$$
\item[(ii)]
$$\frac{F(p,\gamma (z+y))-F(p,\gamma (-z+y))}{2}= p\gamma z +p (1-p)\gamma^2 zy+ \mathcal O(p\gamma^3)$$
\end{enumerate}
In both cases, the constant in the $\mathcal O$-term is uniform as long as $z,y$ stay in compact subsets of $\mathbb R$ and $(p,\gamma (\pm z +y))  \in \mathcal D$. 
\end{corollary}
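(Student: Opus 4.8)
The plan is to read the result straight off the power series expansion of $F$ and to exploit the symmetry built into the two arguments $\gamma(z+y)$ and $\gamma(-z+y)$. Writing $w_\pm := \gamma(\pm z + y)$, I would substitute into the expansion \eqref{eq:expansion} (equivalently, into the form provided by Lemma \ref{taylor}, using $P_1(p)=1$ and $P_2(p)=1-p$) to obtain, for each sign,
\begin{equation*}
F(p, w_\pm) = p\, w_\pm + \frac{p(1-p)}{2}\, w_\pm^2 + \mathcal O(p\gamma^3),
\end{equation*}
where the error term collects all contributions of order $\gamma^3$ and higher. Both claims then reduce to taking the half-sum and half-difference of these two expansions and invoking the elementary identities $w_+ + w_- = 2\gamma y$, $w_+ - w_- = 2\gamma z$, $w_+^2 + w_-^2 = 2\gamma^2(z^2+y^2)$, and $w_+^2 - w_-^2 = 4\gamma^2 zy$.

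Carrying this out for part (i): the linear terms contribute $\tfrac12 p(w_+ + w_-) = p\gamma y$, so the $z$-dependence cancels, while the quadratic terms give $\tfrac12\cdot\tfrac{p(1-p)}{2}(w_+^2 + w_-^2) = \tfrac{p(1-p)}{2}\gamma^2(z^2+y^2)$, which is precisely the asserted expansion. For part (ii): the linear terms give $\tfrac12 p(w_+ - w_-) = p\gamma z$, so now the $y$-dependence cancels, and the quadratic terms give $\tfrac12\cdot\tfrac{p(1-p)}{2}(w_+^2 - w_-^2) = p(1-p)\gamma^2 zy$, again matching the claim. The cancellations are the whole point: symmetrizing extracts the even part of $F$ in the variable $z$ (giving the $y$-linear, $(z^2+y^2)$-quadratic profile of (i)), while antisymmetrizing extracts the odd part (giving the $z$-linear, $zy$-quadratic profile of (ii)).

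The only step requiring genuine care, and where I would concentrate the effort, is the uniformity of the error term. One must justify that the tail $F(p,w) - pw - \tfrac12 p(1-p)w^2 = p\sum_{k\ge 3}\tfrac{P_k(p)}{k!}w^k$ is $\mathcal O(p\gamma^3)$ uniformly as $z,y$ range over a fixed compact set of $\mathbb R$. This rests on the analyticity of $F$ on $\mathcal D$ recorded before Lemma \ref{taylor}: the power series converges absolutely and uniformly on compact subsets of $\mathcal D$, so the coefficients $P_k(p)$ are uniformly controlled there. Since the arguments $w_\pm = \gamma(\pm z + y)$ lie in such a compact subset once $\gamma$ is small and $z,y$ are bounded, and since $|w_\pm| = \mathcal O(\gamma)$, factoring $\gamma^3$ out of the tail leaves a series bounded uniformly in $p$ and in $(z,y)$. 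This delivers the claimed uniform $\mathcal O(p\gamma^3)$ bound and completes both parts; no part of the argument presents a serious obstacle beyond bookkeeping of this uniformity.
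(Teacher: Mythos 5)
Your proposal is correct and follows essentially the same route as the paper: the paper likewise plugs the series of Lemma \ref{taylor} into the symmetrized and antisymmetrized combinations, reads off the $k=1,2$ terms (using $P_1(p)=1$, $P_2(p)=1-p$), and bounds the tail by factoring out $p\gamma^3$ from $\frac{p}{2}\gamma^3\sum_{k\geq 3}\frac{P_k(p)}{k!}\gamma^{k-3}\left((z+y)^k+(-z+y)^k\right)$, with uniformity coming from the absolute and uniform convergence on compact subsets of $\mathcal D$. Your treatment of the uniform error constant is, if anything, slightly more explicit than the paper's.
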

\begin{proof}
From Lemma \ref{taylor} we have
\begin{align*}
&\frac{F(p,\gamma (z+y))+F(p,\gamma (-z+y))}{2}= \frac{p}{2} \sum_{k=1}^{\infty} \frac{P_{k}(p)}{k!} \gamma^{k} \left((z+y)^k + (-z+y)^k \right)
\\
&=   p\gamma y + \frac{1}{2} p(1-p) \gamma^2 (z^2+y^2) + \frac{p}{2}\gamma^3\sum_{k=3}^{\infty}\frac{P_{k}(p)}{k!} \gamma^{k-3} \left((z+y)^k + (-z+y)^k \right)
\\
&=  p\gamma y + \frac{1}{2} p(1-p) \gamma^2 (z^2+y^2) + \mathcal O(p\gamma^3).
\end{align*}
Similarly, we have
\begin{align*}
&\frac{F(p,\gamma (z+y))-F(p,\gamma (-z+y))}{2}= \frac{p}{2} \sum_{k=1}^{\infty} \frac{P_{k}(p)}{k!} \gamma^{k} \left((z+y)^k - (-z+y)^k \right)
\\&=p\gamma z +p (1-p)\gamma^2 zy+ \mathcal O(p\gamma^3).
\end{align*}
\end{proof}
\section{Proof of Theorem \ref{theo:b>1}}\label{sec_proof_th_1-1}
In this section we will prove Theorem \ref{theo:b>1}.
The proof of Theorem \ref{theo:clt_magn_field} is quite similar and in  Section \ref{sec:proof_h_positive} we will basically point out the differences between the two proofs.
Hence, throughout this section we will assume that $h=0$ and $\beta>1$.
The main idea, for both Theorem \ref{theo:b>1} and Theorem \ref{theo:clt_magn_field}, is to consider the following generalization of the partition function
\begin{equation*}
Z_N^+(\beta, g)
:=
\sum_{\sigma \in \{-1,+1\}^N}  g\left(\frac{|\sigma|-Nm}{\sqrt N}\right) \eee^{- \beta H(\sigma)} \ind_{|\sigma|>0},
\end{equation*}
where we put $m:= m^+(\beta)$.
Then we have
\begin{equation}\label{eq_int_against_L_N}
\frac 12 \int_{0}^{+\infty} g(x) L_N^+(d x)
=
\E_{\mu_\beta }\left[ g\left( \frac{|\sigma|-Nm}{\sqrt N} \right) \ind_{|\sigma|>0}\right]
=
\frac{  Z_N^+(\beta, g)}{  Z_N(\beta)},
\end{equation}
 where, for a fixed disorder  $ (\varepsilon_{i,j})_{i,j=1}^N$, $\E_{\mu_\beta }$ denotes the expectation with respect to
the Gibbs measure $\mu_{\beta}$.

Instead of $ Z_N^+(\beta, g)$ we study its normalized version $\tilde Z_N^+(\beta, g)$ in which the summands, as we will show, behave like asymptotically independent random variables.
To define $\tilde Z_N^+(\beta, g)$, for $\sigma \in \{\pm 1\}^N$ and for fixed $\beta >0$ we first introduce
\begin{equation*}
\gamma:= \frac{\beta}{2 Np}
\end{equation*}
as well as
\begin{equation*}
T(\sigma) := \exp\left(\gamma \sum_{i,j=1}^N \vep_{i,j} \sigma_i \sigma_j -\gamma m^2 \sum_{i,j=1}^N \vep_{i,j}\right).
\end{equation*}
Then, for $g \in \mathcal{C}^b(\R)$ (meaning that $g$ is globally bounded and continuous) such that  $g \ge 0, g \not \equiv 0$, we set
\begin{equation*}
\tilde Z_N^+(\beta, g):= \sum_{\sigma \in \{-1, +1\}^N} g\left(\frac{|\sigma|-Nm}{\sqrt N}\right)T(\sigma)  \ind_{|\sigma|>0}.
\end{equation*}
Note that in~\eqref{tildeZN(g)_2} we defined a related quantity
$$
\tilde Z_N(\beta):= \sum_{\sigma \in \{-1, +1\}^N} \eee^{-\beta H(\sigma)}\exp\left( -\gamma m^2 \sum_{i,j=1}^N \vep_{i,j} \right),
$$
which does not exactly correspond to $\tilde Z_N^+(\beta,1)$ since in the latter quantity the summation is restricted to $|\sigma|>0$.
Since
\begin{equation}\label{eq:relation_Z_tilde_and_Z}
Z_N^+(\beta, g)= \tilde Z_N^+(\beta, g) \exp \left(  \gamma m^2 \sum_{i,j=1}^N \varepsilon_{i,j}\right)
\end{equation}
we have
\begin{equation}\label{eq:quotien_Z_tilde}
\frac{  Z_N^+(\beta, g)}{  Z_N(\beta)}= \frac{ \tilde Z_N^+(\beta, g)}{\tilde Z_N(\beta)}.
\end{equation}
Let us investigate, how
$\tilde Z_N^+(\beta,g)$ behaves. We start our analysis
by the following computation
\begin{lemma}\label{ET}
For $h=0$, $\beta >1$, and  $p^3 N \to\infty$ as $N\to\infty$ we have for all $\sigma\in \{-1,+1\}^N$
\begin{equation*}
\E T(\sigma)=
\exp\left(\frac{\beta}{2N}(|\sigma|^2-m^2N^2)+ \frac{(1-p)\beta^2}{8 p}\left(m^4-\frac{2m^2|\sigma|^2}{N^2}+1\right)+o(1)\right)
\end{equation*}
with an $o(1)$-term that is uniform over $\sigma\in \{-1,+1\}^N$.
\end{lemma}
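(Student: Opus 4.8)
The plan is to exploit the independence of the edge variables $\vep_{i,j}$ to factorize $\E T(\sigma)$ over the $N^2$ pairs $(i,j)$. Since $T(\sigma) = \prod_{i,j} \exp(\gamma \vep_{i,j}(\sigma_i\sigma_j - m^2))$ and each factor depends on a single $\vep_{i,j} \in \{0,1\}$, computing $\E[\exp(\gamma \vep_{i,j}(\sigma_i\sigma_j - m^2))] = 1 - p + p\, \eee^{\gamma(\sigma_i\sigma_j - m^2)}$ gives
\begin{equation*}
\log \E T(\sigma) = \sum_{i,j=1}^N F\bigl(p, \gamma(\sigma_i\sigma_j - m^2)\bigr),
\end{equation*}
where $F$ is the function from \eqref{eq:centralfunc}. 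The key simplification is that $\sigma_i\sigma_j$ takes only the two values $\pm 1$.

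First I would count the pairs. Writing $n_\pm$ for the numbers of $+1$ and $-1$ spins, one has $n_+ + n_- = N$ and $n_+ - n_- = |\sigma|$, so the number of pairs with $\sigma_i\sigma_j = +1$ equals $n_+^2 + n_-^2 = (N^2 + |\sigma|^2)/2$ and the number with $\sigma_i\sigma_j = -1$ equals $2n_+n_- = (N^2 - |\sigma|^2)/2$. Hence
\begin{equation*}
\log \E T(\sigma) = N^2 \cdot \frac{A + B}{2} + |\sigma|^2 \cdot \frac{A - B}{2},
\end{equation*}
where $A := F(p, \gamma(1 - m^2))$ and $B := F(p, \gamma(-1 - m^2))$. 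These are exactly the symmetric and antisymmetric combinations handled by Corollary \ref{coro_coefficients} with the choice $z = 1$, $y = -m^2$ (so that $z + y = 1 - m^2$ and $-z + y = -1 - m^2$).

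Then I would apply the corollary. Since $Np \to \infty$ (which follows from $p^3 N \to \infty$), we have $\gamma = \beta/(2Np) \to 0$, so for all large $N$ the arguments $\gamma(\pm 1 - m^2)$ lie in the disc $|z| < z_0$ and $(p, \gamma(\pm z + y)) \in \mathcal D$; as $z, y$ are fixed constants the $\mathcal O(p\gamma^3)$ error is automatically uniform in $\sigma$. Inserting the two expansions and substituting $p\gamma = \beta/(2N)$ together with $p(1-p)\gamma^2 = (1-p)\beta^2/(4N^2 p)$, the main terms collect precisely into $\frac{\beta}{2N}(|\sigma|^2 - m^2 N^2)$ and $\frac{(1-p)\beta^2}{8p}(m^4 - 2m^2|\sigma|^2/N^2 + 1)$. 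The only real point to check, and the step I expect to be the main (if modest) obstacle, is that the error terms truly vanish: both $N^2$ and $|\sigma|^2 \le N^2$ multiply an $\mathcal O(p\gamma^3)$ contribution, and $N^2 p\gamma^3 = \mathcal O(1/(Np^2))$, which tends to $0$ because $Np^2 \ge Np^3 \to \infty$. This yields the claimed formula with a uniform $o(1)$, completing the proof.
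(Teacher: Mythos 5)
Your proposal is correct and follows essentially the same route as the paper: factorizing $\E T(\sigma)$ by independence, reducing $\log \E T(\sigma)$ to the combination $N^2\frac{f(1)+f(-1)}{2}+|\sigma|^2\frac{f(1)-f(-1)}{2}$ (your pair-counting via $n_\pm$ is just a different bookkeeping for the paper's linearization $f(x)=a_0+a_1x$ on $\{\pm1\}$ together with $\sum_{i,j}\sigma_i\sigma_j=|\sigma|^2$), applying Corollary \ref{coro_coefficients} with $z=1$, $y=-m^2$, and checking $N^2 p\gamma^3 = \mathcal{O}(1/(Np^2)) \to 0$. Your error analysis, including the uniformity in $\sigma$ and the observation $Np^2 \ge Np^3 \to \infty$, is exactly what the paper's $o(1)$ claim rests on.
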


\begin{proof}
We compute
$$
\E T(\sigma)
=  \prod_{i,j=1}^N \E \left[ \eee^{\gamma \vep_{i,j} \sigma_i \sigma_j- \gamma m^2 \vep_{i,j}}\right]
=\prod_{i,j=1}^N\left(1-p + p \eee^{\gamma (\sigma_i \sigma_j-m^2)}\right).
$$
If we introduce
$$f(x) = f(x; p,\gamma) = \log (1-p + p\eee^{\gamma (x-m^2)})=F(p,\gamma (x-m^2)),$$
where is $F$ given by \eqref{eq:centralfunc}, we can continue by
$$
 \E T(\sigma)
=
\exp\left(\sum_{i,j=1}^N \log (1-p + p\eee^{\gamma (\sigma_i \sigma_j-m^2)})\right)
=\exp\left(\sum_{i,j=1}^N f(\sigma_i \sigma_j)\right).
$$
Note that $\sigma_i\in \{\pm 1\}$ for all $i$ and hence $\sigma_i \sigma_j \in \{\pm 1\}$.
For the two values $\pm1$ we can rewrite $f$ in a linear form.
More precisely, we write
$$
f(x) = a_0 + a_1 x, \qquad x \in \{-1,+1\}.
$$
The two coefficients $a_0$ and $a_1$ naturally are dependent on $p$, $m$ and $\gamma$. They can be computed from the following two equations
\begin{align*}
a_0
&= \frac  {f(1) + f(-1)}{2}
= \frac{\log (1-p + p\eee^{\gamma(1-m^2)}) + \log (1-p + p\eee^{\gamma(-1-m^2)})}{2}
,\\
a_1
&=
\frac {f(1)-f(-1)}2= \frac{\log (1-p + p\eee^{\gamma(1-m^2)}) - \log (1-p + p\eee^{\gamma(-1-m^2)})}{2}
\end{align*}
to obtain
$$
 \E T(\sigma)
= \exp\left(N^2 a_0 +a_1 |\sigma|^2\right).
$$
Using Corollary \ref{coro_coefficients} we see immediately  that
$$
a_0= -\gamma p m^2+ \frac{\gamma^2}{2}p(1-p)(m^4+1)+\mathcal{O}(p\gamma^3)
$$
and
$$
a_1=p \gamma - \gamma^2 p (1-p)m^2 +\mathcal{O}(p\gamma^3).
$$
Thus
$$
\E T(\sigma)=
\exp\left(\frac{\beta}{2N}(|\sigma|^2-m^2N^2)+ \frac{(1-p)\beta^2}{8 p}\left(m^4-\frac{2m^2|\sigma|^2}{N^2}+1\right)+o(1)\right)
$$
with an $o$-term that is uniform in $\sigma \in\{-1,+1\}^N$. This was the assertion.
\end{proof}

We will now compute the asymptotic expectation of $\tilde Z_N^+(\beta,g)$.
To this end, we will introduce the following set of spin configurations. Set
\begin{equation*}
S^1_N :=\left\{\sigma \in \{\pm 1\}^N : \left|\, |\sigma|-N m \,\right| \le \sqrt N \kappa_N \right\}
\end{equation*}
with $\kappa_N = p\sqrt N/ (p^3N)^{2/5}$.
These spin configurations will be called typical in the following proof. The corresponding set of values of $|\sigma|$ is denoted by $W_{N,m}$, i.e.
\begin{equation}\label{eq:WNm}
W_{N,m} :=\{|\sigma| : \sigma \in S_N^1\} =  \{\lceil N m- \sqrt N \kappa_N \rceil, \ldots, \lfloor N m +
\sqrt N \kappa_N \rfloor\}.
\end{equation}
The set of the atypical spin configurations is denoted by
$$
S_{N}^{1c}:=\{\sigma: \left|\, |\sigma| - N m\right| > \sqrt N \kappa_N, |\sigma|>0\}.
$$
\begin{proposition}\label{prop:expect}
For all $g \in \mathcal{C}^b(\R), g \ge 0, g \not\equiv 0$, $h=0, \beta>1$, and $p$ with $Np^3 \to \infty$ we have
\begin{equation*}
\lim_{N \to \infty} \frac{\E \tilde Z_N^+(\beta, g)}{\eee^{\frac{(1-p)\beta^2}{8 p}(1-m^4)-N I(m)} 2^{N+1}\frac{1}{\sqrt{1-m^2}} \sigma(\beta)  \E_{\xi} [g(\xi)]}=1.
\end{equation*}
Here, the function $I$ is given by formula \eqref{def:I} below and
$$
\E_{\xi} [g(\xi)]=\frac 1 {\sqrt{2 \pi \sigma^2(\beta)}}\int_{-\infty}^\infty g(x) \eee^{-\frac{x^2}{2\sigma^2(\beta)}} dx,
$$
i.e.\
$\xi$ denotes a normally distributed random variable with expectation $0$ and variance $\sigma^2(\beta)$.
\end{proposition}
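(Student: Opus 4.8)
The plan is to evaluate $\E\tilde Z_N^+(\beta,g)$ by Laplace's method. The key structural observation is that, after taking the expectation via Lemma~\ref{ET}, both $g$ and the weight $\E T(\sigma)$ depend on $\sigma$ only through $s:=|\sigma|$. Hence, by linearity and grouping the configurations with a fixed value of $s$ (there are $\binom{N}{(N+s)/2}$ of them),
\begin{equation*}
\E\tilde Z_N^+(\beta,g)=\sum_{\substack{s>0\\ s\equiv N\,(\mathrm{mod}\,2)}}\binom{N}{(N+s)/2}\,g\Bigl(\tfrac{s-Nm}{\sqrt N}\Bigr)\,\E T(\sigma).
\end{equation*}
Inserting Lemma~\ref{ET}, I would pull out the $s$-independent constant $\exp(-\tfrac{\beta m^2N}{2}+\tfrac{(1-p)\beta^2}{8p}(m^4+1))$ and the uniform $\eee^{o(1)}$, keeping only the $s$-dependent weight $\exp(\tfrac{\beta s^2}{2N}-\tfrac{(1-p)\beta^2m^2s^2}{4pN^2})$.

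\emph{Main term.} On the typical set $S_N^1$ set $x=s/N$ and $u=(s-Nm)/\sqrt N$. I would apply Stirling in the uniform form $\binom{N}{N(1+x)/2}=\sqrt{\tfrac{2}{\pi N(1-x^2)}}\,2^N\eee^{-NI_0(x)}(1+o(1))$, with $I_0(x)=\tfrac{1+x}{2}\log(1+x)+\tfrac{1-x}{2}\log(1-x)$. Multiplying by $\eee^{\beta s^2/(2N)}$ gives the volume factor $2^N$ times $\eee^{-N\Phi(x)}$, where $\Phi(x):=I_0(x)-\tfrac{\beta}{2}x^2$; by~\eqref{eq:CWeq} the unique minimiser of $\Phi$ on $(0,1)$ is $x=m$, with $\Phi'(m)=0$ and $\Phi''(m)=\tfrac{1}{1-m^2}-\beta=\tfrac{1}{\sigma^2(\beta)}$. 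A second–order Taylor expansion on $S_N^1$ then yields $N\Phi(x)=N\Phi(m)+\tfrac{u^2}{2\sigma^2(\beta)}+o(1)$ and $\tfrac{1}{\sqrt{1-x^2}}=\tfrac{1}{\sqrt{1-m^2}}(1+o(1))$, while $\tfrac{s^2}{N^2}=m^2+o(1)$ collapses the $1/p$–weight to the constant $\eee^{-(1-p)\beta^2m^4/(4p)}$. Its product with the factored–out constant is precisely $\eee^{-\beta m^2N/2}\,\eee^{(1-p)\beta^2(1-m^4)/(8p)}$, and $\eee^{-\beta m^2N/2}\eee^{-N\Phi(m)}=\eee^{-NI_0(m)}$ supplies the exponential rate $\eee^{-NI(m)}$. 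Since admissible values of $u$ are spaced by $2/\sqrt N$, the surviving factor $\sum g(u)\eee^{-u^2/2\sigma^2(\beta)}$ is a Riemann sum converging to $\tfrac{\sqrt N}{2}\int_\R g(u)\eee^{-u^2/2\sigma^2(\beta)}\dd u=\tfrac{\sqrt N}{2}\sqrt{2\pi}\,\sigma(\beta)\,\E_\xi[g(\xi)]$. Collecting the Stirling prefactor $\sqrt{2/(\pi N(1-m^2))}$, this normalisation, and $\Phi''(m)=1/\sigma^2(\beta)$ produces the asserted prefactor $\tfrac{1}{\sqrt{1-m^2}}\sigma(\beta)\,\E_\xi[g(\xi)]$ (up to the stated powers of two).

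\emph{Negligibility of atypical $\sigma$, the main obstacle.} Here the tuning of $\kappa_N=p\sqrt N/(p^3N)^{2/5}$ is essential, and this is where I expect the real work to lie. For the expansion above one needs the cubic Taylor error $\kappa_N^3/\sqrt N=(p^3N)^{-1/5}$ and the linear $1/p$–correction $\kappa_N/(p\sqrt N)=(p^3N)^{-2/5}$ to vanish, which they do. On the complement $S_N^{1c}$ I would bound $\E T(\sigma)$ from above by dropping the nonpositive term $-\tfrac{(1-p)\beta^2m^2s^2}{4pN^2}$ in Lemma~\ref{ET}; this leaves a uniform enhancement $\eee^{O(1/p)}$ which must be beaten by the entropic gap $\eee^{-N(\Phi(x)-\Phi(m))}$. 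Moderate deviations $\kappa_N<|u|\lesssim\delta\sqrt N$ are handled by $\sum_{|u|>\kappa_N}\eee^{-cu^2}\lesssim\eee^{-c\kappa_N^2}$ together with $\kappa_N^2p=(p^3N)^{1/5}\to\infty$, so that $\eee^{-c\kappa_N^2}\eee^{O(1/p)}\to0$; far deviations $|x-m|\ge\delta$ with $x>0$ (where $\Phi(x)-\Phi(m)\ge c_\delta>0$, since $\Phi$ has no second minimiser on the positive axis) contribute at most $N\eee^{-c_\delta N}\eee^{O(1/p)}\to0$ because $1/p\ll N^{1/3}\ll N$. Balancing the potentially huge $\eee^{O(1/p)}$ factor against the entropic suppression is the heart of the matter, and it is precisely the hypothesis $p^3N\to\infty$, entering through $\kappa_N^2p=(p^3N)^{1/5}\to\infty$, that renders it possible.
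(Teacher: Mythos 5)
Your proposal follows the paper's own proof essentially step for step: the same grouping of configurations by the value of $|\sigma|$, the same uniform Stirling expansion of the binomial coefficient, the same cancellation of the linear term in the exponent via the mean-field equation $I'(m)=\operatorname{artanh}(m)=\beta m$, the same Gaussian Riemann-sum step with $\Phi''(m)=1/\sigma^2(\beta)$, and the same typical/atypical splitting with exactly the paper's $\kappa_N$ and the correct critical rates $\kappa_N^3/\sqrt N=(p^3N)^{-1/5}$, $\kappa_N/(p\sqrt N)=(p^3N)^{-2/5}$ and $p\kappa_N^2=(p^3N)^{1/5}$. Your moderate/far-deviation split is the paper's single bound \eqref{eq:Iaussen} carried out in two pieces, and your cruder treatment of the atypical set --- dropping the entire nonpositive $1/p$-term and beating the resulting $\eee^{O(1/p)}$ enhancement by $\eee^{-c\kappa_N^2}$ --- closes just as well as the paper's more surgical version (which expands that term and absorbs only its linear correction into $-K_1c_k^2$), precisely because $1/p=o(\kappa_N^2)$. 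The only other cosmetic difference is that the paper spells out the Riemann approximation over the growing window $[-\kappa_N,\kappa_N]$ by a truncation argument, which you assert; for bounded continuous $g$ against a Gaussian weight this is routine.

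The one point you must not leave as a hedge is ``(up to the stated powers of two)''. Your parity-respecting bookkeeping --- $|\sigma|\equiv N \pmod 2$, so $u$ runs over a lattice of spacing $2/\sqrt N$ and the Riemann sum is asymptotic to $\tfrac{\sqrt N}{2}\int_\R g(u)\eee^{-u^2/2\sigma^2(\beta)}\,\dd u$ --- produces the constant $2^{N}\tfrac{1}{\sqrt{1-m^2}}\sigma(\beta)\E_\xi[g(\xi)]$, so with the normalization $2^{N+1}$ asserted in Proposition~\ref{prop:expect} your computation gives a limit of $1/2$, not $1$. The paper arrives at $2^{N+1}$ because $W_{N,m}$ in \eqref{eq:WNm} is taken to consist of \emph{all} integers in the window and the Riemann sum is taken over a lattice of mesh $1/\sqrt N$, silently ignoring the parity constraint (for $k$ of the wrong parity the inner sum over $\sigma$ is empty, and $\binom{N}{(N+k)/2}$ has a non-integer lower index). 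So the factor of two you wave at is a genuine discrepancy, and your version of the count is the careful one; but a proof of a statement asserting a limit equal to $1$ cannot end with ``up to powers of two'' --- you should state the constant you actually obtain and identify the parity factor explicitly. Note also that the factor is harmless for Theorems \ref{theo:b>1} and \ref{theo:clt_magn_field}, where only the ratio $\tilde Z_N^+(\beta,g)/\tilde Z_N(\beta)$ enters and the same factor appears (consistently) in numerator and denominator.
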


\begin{proof}
By decomposing $\{\pm 1\}^N$ into typical and atypical $\sigma$'s, defined by $ S^1_N$,  we have
\begin{equation*}
\E \tilde Z_N^+(\beta, g)
=
\sum_{\sigma \in S^1_N}g\left(\frac{|\sigma|-Nm}{\sqrt N}\right)\E T(\sigma)
+
\sum_{\sigma \in S_N^{1c}} g\left(\frac{|\sigma|-Nm}{\sqrt N}\right)\E T(\sigma).
\end{equation*}
For the typical configurations with $|\sigma| \in W_{N,m}$ we have from Lemma \ref{ET}
\begin{eqnarray}\label{eq:first step exp}
&&\sum_{\sigma \in S^1_N}g\left(\frac{|\sigma|-Nm}{\sqrt N}\right)\E T(\sigma)\\
 &=&\eee^{\frac{(1-p)\beta^2}{8 p}(m^4+1)}\sum_{\sigma \in S^1_N}g\left(\frac{|\sigma|-Nm}{\sqrt N}\right)\eee^{\frac{\beta}{2N}(|\sigma|^2-m^2N^2)-\frac{(1-p)\beta^2}{8 p}\frac{2m^2|\sigma|^2}{N^2}+o(1)}\nonumber\\
&=&\eee^{\frac{(1-p)\beta^2}{8 p}(m^4+1)} \sum_{k\in W_{N,m}} \sum_{\sigma \in \{\pm 1\}^N: \atop |\sigma|=k}
g\left(\frac{|\sigma|-Nm}{\sqrt N}\right)\eee^{\frac{\beta}{2N}(|\sigma|^2-m^2N^2)-\frac{(1-p)\beta^2}{8 p}\frac{2m^2|\sigma|^2}{N^2}+o(1)}\nonumber\\
&=&\eee^{\frac{(1-p)\beta^2}{8 p}(m^4+1)} \sum_{k\in W_{N,m}}
g\left(\frac{k-Nm}{\sqrt N}\right)\eee^{\frac{\beta}{2N}(k^2-m^2N^2)-\frac{(1-p)\beta^2}{8 p}\frac{2m^2k^2}{N^2}+o(1)}\binom{N}{\frac{N+k}2}.\nonumber
\end{eqnarray}
Note that from Stirling's formula
$$
\log (n!)= n \log n - n +\frac 12 \log(2 \pi) + \frac 12 \log \left(n \right)+\mathcal{O}(1/n),
$$
we obtain for $k \in W_{N,m}$
\begin{align*}
2^{-N}\binom{N}{\frac{N+k}2}&= \sqrt{\frac {2} {\pi N}}\frac{1}{\sqrt{(1-\frac kN)(1+\frac kN)}} \eee^{-NI\left(\frac kN\right)+o(1)}\\
&= (1+o(1))\sqrt{\frac {2} {\pi N}}\frac{1}{\sqrt{(1-\frac{k^2}{N^2})}} \eee^{-NI\left(\frac kN\right)}
\\&
= (1+o(1))\sqrt{\frac {2} {\pi N (1-m^2)}} \eee^{-NI\left(\frac kN\right)}
\end{align*}
with an $o(1)$-term that is uniform for all $k$ such that $k \in W_{N,m}$. We have used that $\kappa_N = o(\sqrt N)$.
Here,
\begin{equation}\label{def:I}
I(x):=\frac{1-x}2\log (1-x)+\frac{1+x}2\log (1+x) \qquad \mbox{for } x\in [-1,1].
\end{equation}
Hence,
we arrive at
\begin{align*}
&\sum_{\sigma \in S^1_N}g\left(\frac{|\sigma|-Nm}{\sqrt N}\right)\E T(\sigma)\\& \quad =
(1+o(1))\eee^{\frac{(1-p)\beta^2}{8 p}(m^4+1)} 2^N \frac{1}{\sqrt{1-m^2}} \sqrt{\frac {2} {\pi N} }\\
& \qquad \times
\sum_{k\in W_{N,m}}
g\left(\frac{k-Nm}{\sqrt N}\right)\eee^{\frac{\beta}{2N}(k^2-m^2N^2)-\frac{(1-p)\beta^2}{8 p}\frac{2m^2k^2}{N^2}-NI\left(\frac kN\right)}.
\end{align*}
Let us write $\frac kN= m+\frac{c_k}{\sqrt N}$ with $|c_k|\le \kappa_N$. Then,
$$
NI\left(\frac kN\right)= NI(m)+I'(m)c_k \sqrt N+ I''(m)\frac{c_k^2}2+\mathcal{O}\left(\frac{\kappa_N^3}{\sqrt N}\right)
$$
and the $\mathcal{O}$-term is uniform for all $k\in  W_{N,m}$ and can be estimated above by $o(1)$
because of our choice of $\kappa_N$ and since $p^3N\to\infty$.
Then
\begin{align}
&\sum_{\sigma \in S^1_N}g\left(\frac{|\sigma|-Nm}{\sqrt N}\right)\E T(\sigma) \nonumber\\
& \quad =
(1+o(1))\eee^{\frac{(1-p)\beta^2}{8 p}(m^4+1)-N I(m)} 2^N \frac{1}{\sqrt{1-m^2}} \sqrt{\frac {2} {\pi N} } \nonumber\\
&\qquad
\times \sum_{k\in W_{N,m}}
g\left(\frac{k-Nm}{\sqrt N}\right)\eee^{\frac{\beta}{2N}\left(2 N^{3/2}m c_k+c_k^2 N\right)-\frac{(1-p)\beta^2}{8 p}\frac{2m^4N^2+4m^3N^{3/2}c_k+2m^2c_k^2 N}{N^2}} \nonumber \\
&\qquad \qquad \qquad \eee^{-I'(m) c_k\sqrt N
- I''(m)\frac{c_k^2}2}. \label{eq:coeff_c_k}
\end{align}
Now  the linear term in $c_k$ in the above expression is
\begin{equation*}
c_k \left(\beta m \sqrt N -\frac{\beta^2 (1-p)m^3}{2p \sqrt N}-\sqrt N I'(m)\right)
=
c_k\left(\beta m \sqrt N -\sqrt N I'(m)\right) + o(1)
=
o(1),
\end{equation*}
where the first equality follows from $\kappa_N = o(p\sqrt N)$ while the second equality follows from
$I'(x)=\frac 12 \log\left(\frac{1+x}{1-x}\right)= \mathrm{artanh}(x)$
and  $m=\tanh(\beta m)$.
Therefore, with an $o(1)$-term that is uniform for typical $\sigma$
we have that
\begin{align*}
&\sum_{\sigma \in S^1_N}g\left(\frac{|\sigma|-Nm}{\sqrt N}\right)\E T(\sigma)\notag\\
& \quad =
(1+o(1))\eee^{\frac{(1-p)\beta^2}{8 p}(m^4+1)-N I(m)} 2^N \frac{1}{\sqrt{1-m^2}} \sqrt{\frac {2} {\pi N} }\notag\\
&\qquad
\sum_{k\in W_{N,m}}
g\left(\frac{k-Nm}{\sqrt N}\right)
\eee^{\frac{\beta}{2}c_k^2-\frac{(1-p)\beta^2}{8 p}\frac{2m^4N^2+2m^2c_k^2 N}{N^2}}\eee^{- I''(m)\frac{c_k^2}2} \notag\\
& \quad =
(1+o(1))\eee^{\frac{(1-p)\beta^2}{8 p}(-m^4+1)-N I(m)} 2^N \frac{1}{\sqrt{1-m^2}} \sqrt{\frac {2} {\pi N} }\\
&\qquad
\sum_{k\in W_{N,m}}
g\left(\frac{k-Nm}{\sqrt N}\right)\eee^{\frac{\beta-I''(m)}{2}c_k^2}\notag,
\end{align*}
where we used that $\kappa_N^2 = o(pN)$.
Note that for the term in the last exponential we have
$$
\frac{\beta-I''(m)}{2}c_k^2= -\frac{1-\beta(1-m^2)}{2(1-m^2)}c_k^2
=
-\frac{1}{2 \sigma(\beta)^2} c_k^2 = -\frac{1}{2 \sigma(\beta)^2} \left(\frac{k-Nm}{\sqrt{N}}  \right)^2,
$$
where we used $c_k = (k - Nm)/\sqrt N$. We see that
$$
\sqrt{\frac {1} {2\pi N \sigma^2(\beta)} }
\sum_{k\in W_{N,m}}
g\left(\frac{k-Nm}{\sqrt N}\right)\exp\left(-\frac{1}{2 \sigma(\beta)^2} \left(\frac{k-Nm}{\sqrt{N}}  \right)^2\right)
$$
converges to
$$
\sqrt{\frac {1} {2\pi \sigma^2(\beta)} } \int_{-\infty}^{\infty}g(x)\exp\left(- \frac{x^2}{2 \sigma(\beta)^2}\right) dx= \E_{\xi} [g(\xi)].$$
Indeed, this is basically the approximation of an integral by its Riemann sum. Note that for $k\in W_{N,m}$ the variable  $c_k = (k - Nm)/\sqrt N$
ranges in $[-\kappa_N, +\kappa_N]$ intersected with a lattice of mesh size $1/\sqrt N$. Over each fixed interval, the Riemann approximation argument applies. Since $\kappa_N\to\infty$, we need an additional justification for the applicability of the Riemann approximation over intervals of growing size.
Note that
$$
\sqrt{\frac {1} {2\pi \sigma^2(\beta)} } \int_{-\infty}^{\infty}g(x)
\exp\left(- \frac{x^2}{2 \sigma(\beta)^2}\right) dx < \infty.
$$
Hence, for all $\vep>0$, there is a compact interval $I_\vep$ such that
$$
\sqrt{\frac {1} {2\pi \sigma^2(\beta)} }  \int_{x \notin I_\vep }g(x)\exp\left(- \frac{x^2}{2 \sigma(\beta)^2}\right) dx< \vep
$$
as well as
$$
\sqrt{\frac {1} {2\pi N \sigma^2(\beta)} }
\sum_{k\in W_{N,m}:\atop c_k\in I_\vep^c }
g\left(\frac{k-Nm}{\sqrt N}\right)\exp\left(-\frac{1}{2 \sigma(\beta)^2} \left(\frac{k-Nm}{\sqrt{N}}\right)^2\right)< \vep,
$$
where $I_\vep^c$ denotes the complement of $I_\vep$ in $\R$. For the second claim, bound $g$ by its supremum and estimate the remaining sum by the corresponding integral using monotonicity.
On the fixed interval $I_\vep$ we have convergence of Riemann sums to Riemann integrals, meaning that for sufficiently
large $N$, the difference between both is at most $\vep$.
But this means
\begin{multline*}
\left| \sqrt{\frac {1} {2\pi N \sigma^2(\beta)} }
\sum_{k\in W_{N,m}}
g\left(\frac{k-Nm}{\sqrt N}\right)\exp\left(-\frac{1}{2 \sigma(\beta)^2} \left(\frac{k-Nm}{\sqrt{N}}\right)^2\right)\right.
\\ - \left.
\sqrt{\frac {1} {2\pi \sigma^2(\beta)} }  \int_{-\infty}^{\infty}g(x)\exp\left(- \frac{x^2}{2 \sigma(\beta)^2}\right) dx \right| < 3\vep
\end{multline*}
for all $N$ sufficiently large.

To prove the proposition, it suffices to show that the atypical spin configurations do not contribute to the asymptotic size of $\tilde Z_N^+(\beta,g)$.
Recall that the set of atypical spin configurations is denoted by
$$
S_{N}^{1c}:=\{\sigma: \left|\, |\sigma| - N m\right| > \sqrt N \kappa_N, |\sigma|>0\}.
$$
The corresponding set of possible values for $|\sigma|$ will be called $W_{N,m}^c$.
We will use the following bound on the binomial coefficient which again is
a consequence of Markov's inequality:
\begin{equation}\label{eq:crude_bound_1}
\binom{N}{\frac{N+k}2} \le
2^N \eee^{-N I(\frac k N)},
\qquad |k|\leq N.
\end{equation}
We can use Lemma~\ref{ET} in combination with~\eqref{eq:crude_bound_1} to obtain
\begin{align}
&\sum_{\sigma \in S^{1c}_{N}}g\left(\frac{|\sigma|-Nm}{\sqrt N}\right)\E T(\sigma) \nonumber\\
\le &\eee^{\frac{(1-p)\beta^2}{8 p}(m^4+1)}||g||_{\infty}\sum_{k \in W_{N,m}^c}
\eee^{\frac{\beta}{2N}(k^2-m^2N^2)-\frac{(1-p)\beta^2}{8 p}\frac{2m^2 k^2}{N^2}+o(1)}\binom{N}{\frac{N+k}2} \nonumber \\
\le& C \eee^{\frac{(1-p)\beta^2}{8 p}(m^4+1)}2^N \sum_{
k \in W_{N,m}^c}
\eee^{\frac{\beta}{2N}(k^2-m^2N^2)-\frac{(1-p)\beta^2}{8 p}\frac{2m^2k^2}{N^2}-NI(\frac kN)} \label{eq:atyp_spin_1}
\end{align}
for some constant $C>0$.
For the next step again write $k= N m + c_k \sqrt N$, this time $\kappa_N < |c_k| < C_m\sqrt N$ (the exact value for $C_m$ depends on $m$).
For the second term in the exponent we obtain the estimate
\begin{align*}
&-\frac{(1-p)\beta^2}{8 p}\frac{2m^2k^2}{N^2}= -\frac{m^4(1-p)\beta^2}{4 p}
-\frac{c_k (1-p)\beta^2}{2 p \sqrt N}m^3-\frac{c_k^2(1-p)\beta^2}{4 p N}m^2
\\
& \leq  -\frac{m^4(1-p)\beta^2}{4 p}-\frac{c_k (1-p)\beta^2}{2 p \sqrt N}m^3.\nonumber
\end{align*}
Inserting this estimate into \eqref{eq:atyp_spin_1} leads to
\begin{align}
&\sum_{\sigma \in S^{1c}_{N}}g\left(\frac{|\sigma|-Nm}{\sqrt N}\right)\E T(\sigma) \nonumber\\
\le& C \eee^{\frac{(1-p)\beta^2}{8 p}(m^4+1)}2^N \sum_{
k \in W_{N,m}^c}
\eee^{\frac{\beta}{2N}(k^2-m^2N^2)-\frac{m^4(1-p)\beta^2}{4 p}-\frac{c_k (1-p)\beta^2}{2 p \sqrt N}m^3-NI(\frac kN)}. \label{eq:atyp_spin_2}
\end{align}
Next observe that from the analysis of the Curie-Weiss model (see \cite{Ellis-EntropyLargeDeviationsAndStatisticalMechanics} for the large deviations regime and \cite{EL04} for the regime of moderate deviations) we know that the function
$\frac{k}{N} \mapsto \frac{\beta}{2N}k^2-NI(\frac kN)$ attains its maximum for $\frac kN$ positive at $m$ and that
\begin{equation}\label{eq:Iaussen}
\frac{\beta}{2N}k^2-NI\left(\frac kN\right)\le N\left(\frac{\beta}{2}m^2-I(m)\right)-K_1 c_k^2
\end{equation}
for some sufficiently small constant $K_1>0$.
Moreover,  $c_k^2$ will be at least of order $\kappa_N^2$.
Hence
\begin{align}
&\sum_{\sigma \in S^{1c}_{N}}g\left(\frac{|\sigma|-Nm}{\sqrt N}\right)\E T(\sigma)
\notag \\
\le &
C \eee^{\frac{(1-p)\beta^2}{8 p}(-m^4+1)}2^N
\eee^{-N I(m)}\sum_{
k \in W_{N,m}^c}
\eee^{-\frac{2c_k (1-p)\beta^2}{4 p \sqrt N}m^3-K_1 c_k^2}
\notag \\
\le &
C \eee^{\frac{(1-p)\beta^2}{8 p}(-m^4+1)}2^N
\eee^{-N I(m)}\sum_{
k \in W_{N,m}^c}
\eee^{-K_2 c_k^2}\label{eq:expectaussen}
\end{align}
for some other constant $K_2>0$. We used that $c_k/(p\sqrt N) = c_k^2 /(c_k p\sqrt N)$ with  $c_kp\sqrt N\to\infty$ in the denominator.

Now $c_k^2\ge \kappa_N^2\geq N^{1/10}$ and the sum contains at most $N$ summands, which yields
\begin{equation}\label{eq:expectaussen2}
\lim_{N \to \infty}\sum_{
k \in W_{N,m}^c}
\eee^{-K_2 \kappa_N^2} = 0.
\end{equation}
This shows that the contribution of the spin configurations in $S_{N}^{1c}$ is negligible and therefore proves the proposition.
\end{proof}

In the next step we will control the variance of $\tilde Z_N^+(\beta, g)$ in order to show that it is of smaller order than the squared expectation. This would imply that the quantity $\tilde Z_N^+(\beta, g)$ is self-averaging meaning that $\tilde Z_N^+(\beta, g)/ \E \tilde Z_N^+(\beta, g)$ converges in probability to $1$. Our first step in this direction is
\begin{lemma}\label{CovT}
For $h=0$, $\beta>1$, all $p=p(N)$ such that $p^3 N \to \infty$, and all $\sigma, \tau \in \{-1,+1\}^N$ we have
\begin{equation*}
\E (T(\sigma)T(\tau))= \exp(N^2 b_0 + b_1 |\sigma|^2 + b_2 |\tau|^2 + b_{12} |\sigma \tau|^2),
\end{equation*}
where
\begin{align*}
b_0 &= -2 (m^2 p) \gamma+(p+2 m^4 p-p^2-2 m^4 p^2) \gamma^2+\mathcal{O}(p\gamma^3),
\\
b_1 =b_2&=p \gamma
+(-2 m^2 p+2 m^2 p^2) \gamma^2+\mathcal{O}(p\gamma^3),
\\
b_{12} &=  (p - p^2) \gamma^2 +\mathcal{O}(p\gamma^3),
\end{align*}
and the $\mathcal{O}$-term  is uniform over $\sigma, \tau\in \{-1,+1\}^N$.  Here, we set
$$
|\sigma\tau|:=\sum_{i=1}^N \sigma_i \tau_i.
$$
In particular, we have
\begin{multline*}
 \E (T(\sigma)T(\tau))
 = \eee^{-2m^2N^2p\gamma+N^2p(1-p)(2m^4+1)\gamma^2} \\\times \eee^{\left(p \gamma
+(-2 m^2 p+2 m^2 p^2) \gamma^2\right)(|\sigma|^2+|\tau|^2)+(p - p^2) \gamma^2 |\sigma\tau|^2+o(1)}.
\end{multline*}
\end{lemma}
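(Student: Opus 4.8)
The plan is to exploit the independence of the edge indicators $\vep_{i,j}$ exactly as in the proof of Lemma~\ref{ET}, but now applied to the product $T(\sigma)T(\tau)$. Writing out the definitions, the combined exponent is $\gamma\sum_{i,j}\vep_{i,j}(\sigma_i\sigma_j+\tau_i\tau_j-2m^2)$, so the factors over distinct $(i,j)$ are independent and
$$\E(T(\sigma)T(\tau))=\prod_{i,j=1}^N\left(1-p+p\,\eee^{\gamma(\sigma_i\sigma_j+\tau_i\tau_j-2m^2)}\right)=\exp\Bigl(\sum_{i,j=1}^N F\bigl(p,\gamma(\sigma_i\sigma_j+\tau_i\tau_j-2m^2)\bigr)\Bigr),$$
with $F$ as in~\eqref{eq:centralfunc}. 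This reduces the whole problem to analysing a single summand as a function of the pair $(\sigma_i\sigma_j,\tau_i\tau_j)$.

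Next I would observe that each summand depends only on $x:=\sigma_i\sigma_j\in\{\pm1\}$ and $y:=\tau_i\tau_j\in\{\pm1\}$, and that every function on $\{\pm1\}^2$ is uniquely a multilinear polynomial (the Walsh--Fourier expansion on the hypercube). Hence there are coefficients $b_0,b_1,b_2,b_{12}$, depending on $p,m,\gamma$ but not on $i,j$, with
$$F\bigl(p,\gamma(x+y-2m^2)\bigr)=b_0+b_1x+b_2y+b_{12}xy,\qquad x,y\in\{\pm1\}.$$
Summing over $i,j$ and using $\sum_{i,j}\sigma_i\sigma_j=|\sigma|^2$, $\sum_{i,j}\tau_i\tau_j=|\tau|^2$, and $\sum_{i,j}\sigma_i\sigma_j\tau_i\tau_j=(\sum_i\sigma_i\tau_i)^2=|\sigma\tau|^2$ then produces the claimed exponential form $\E(T(\sigma)T(\tau))=\exp(N^2b_0+b_1|\sigma|^2+b_2|\tau|^2+b_{12}|\sigma\tau|^2)$. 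The symmetry $x\leftrightarrow y$ of the left-hand side forces $b_1=b_2$.

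To extract the coefficients I would set $w:=y-2m^2$ and split each summand into its even and odd parts in $x$, namely $\tfrac12\bigl(F(p,\gamma(1+w))\pm F(p,\gamma(-1+w))\bigr)$. These are precisely the two combinations handled by Corollary~\ref{coro_coefficients}(i),(ii) with $z=1$ and the corollary's second argument equal to $w$, giving $b_1+b_{12}y=p\gamma+p(1-p)\gamma^2 w+\mathcal{O}(p\gamma^3)$ and $b_0+b_2y=p\gamma w+\tfrac12 p(1-p)\gamma^2(1+w^2)+\mathcal{O}(p\gamma^3)$. Evaluating each at $y=+1$ and $y=-1$ and forming half-sums and half-differences isolates $b_0$, $b_1=b_2$, and $b_{12}$; after inserting $w=y-2m^2$ and simplifying $(1\mp2m^2)^2$, this reproduces the three displayed expansions. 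For the ``in particular'' statement I would substitute $\gamma=\beta/(2Np)$ and regroup, using $(p+2m^4p-p^2-2m^4p^2)=p(1-p)(1+2m^4)$ and $(-2m^2p+2m^2p^2)=-2m^2p(1-p)$.

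The only delicate point — and the sole place where $p^3N\to\infty$ is used — is the uniform smallness of the remainders once they are multiplied by the quadratic forms. Since $0\le|\sigma|^2,|\tau|^2,|\sigma\tau|^2\le N^2$, each $\mathcal{O}(p\gamma^3)$ coefficient contributes at most $\mathcal{O}(N^2p\gamma^3)$ to the exponent, and with $\gamma=\beta/(2Np)$ one has $N^2p\gamma^3=\mathcal{O}\bigl(1/(Np^2)\bigr)=o(1)$ because $Np^2\ge Np^3\to\infty$. All error terms therefore collapse into a single uniform $o(1)$ in the exponent, giving the final display. I expect this error bookkeeping, rather than the algebra of the coefficients, to be the genuine obstacle: one must verify that the $\mathcal{O}$-constants from Corollary~\ref{coro_coefficients} are uniform over $\sigma,\tau$ (they are, since $x,y\in\{\pm1\}$ keep the arguments in a fixed compact subset of $\mathcal{D}$) and that multiplication by $N^2$ does not destroy this uniformity.
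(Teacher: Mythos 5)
Your proposal is correct and follows essentially the same route as the paper: factorize $\E(T(\sigma)T(\tau))$ over the independent edges, represent the summand as a multilinear function $b_0+b_1x_1+b_2x_2+b_{12}x_1x_2$ on $\{\pm1\}^2$, and extract the coefficients via Corollary~\ref{coro_coefficients}, with the $o(1)$ in the final display coming from $N^2p\gamma^3=\mathcal{O}(1/(Np^2))=o(1)$. Your even/odd splitting in $x$ followed by evaluation at $y=\pm1$ is just a reorganization of the paper's direct formulas $b_0=\tfrac{f(2)+f(-2)+2f(0)}{4}$, $b_1=b_2=\tfrac{f(2)-f(-2)}{4}$, $b_{12}=\tfrac{f(2)+f(-2)-2f(0)}{4}$, and your explicit uniformity bookkeeping is a welcome elaboration of a step the paper leaves implicit.
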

\begin{proof}
We have
\begin{equation*}
\mathbb E(T(\sigma) T(\tau))=  \prod_{i,j=1}^N \left( 1-p+pe^{\gamma (\sigma_i \sigma_j + \tau_i \tau_j -2m^2)}\right)= \exp \left( \sum_{i,j=1}^N f (\sigma_i \sigma_j + \tau_i \tau_j)\right),
\end{equation*}
where
$$f(x) = f(x; p,\gamma) = \log (1-p + p\eee^{\gamma (x-2m^2)})=F(p,\gamma (x-2m^2)),$$
and $F$ is given by \eqref{eq:centralfunc}.
Since for fixed $m$, $f(\sigma_i \sigma_j + \tau_i \tau_j)$ is a function of the arguments $x_1 = \sigma_i \sigma_j$ and $x_2= \tau_i \tau_j$, where $x_1$ and $x_2$ take values in $\{\pm 1\}$.
Hence, for these values of $x_1$ and $x_2$, we can write
\begin{align*}
f(x_1+x_2)=b_0 + b_1x_1 +b_2x_2 + b_{12} x_1x_2,
\end{align*}
where the coefficients are given by
\begin{align*}
b_0 &= \frac {f(2)+f(-2)+2f(0)}{4},\\
b_{12} &= \frac {f(2)+f(-2)-2f(0)}4,\\
b_1&= b_{2} = \frac {f(2)-f(-2)}4.
\end{align*}
The representation of the coefficients is then an immediate consequence of Corollary \ref{coro_coefficients}.
Using \eqref{eq:expansion} the lemma is proved.
\end{proof}
From here we start to estimate the variance of $\tilde Z_N^+(\beta, g)$.
\begin{proposition}\label{prop:var}
For $h=0$, $\beta>1$, all $p=p(N)$ such that $p^3 N \to \infty$, and all $g \in \mathcal{C}^b(\R), g \ge 0, g\not\equiv 0$ we have that
\begin{equation*}
\V(\tilde Z_N^+(\beta, g))=o\left(\E^2[\tilde Z_N^+(\beta, g)]\right).
\end{equation*}
\end{proposition}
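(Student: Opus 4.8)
The plan is to prove the equivalent statement that the second moment is asymptotic to the squared first moment, namely $\E[(\tilde Z_N^+(\beta,g))^2]=(1+o(1))\,\E^2[\tilde Z_N^+(\beta,g)]$; since $\V(\tilde Z_N^+)=\E[(\tilde Z_N^+)^2]-\E^2[\tilde Z_N^+]$, this is exactly the assertion. Expanding the square,
\[
\E[(\tilde Z_N^+(\beta,g))^2] = \sum_{\sigma,\tau\in\{\pm1\}^N} g\!\left(\tfrac{|\sigma|-Nm}{\sqrt N}\right) g\!\left(\tfrac{|\tau|-Nm}{\sqrt N}\right) \E[T(\sigma)T(\tau)]\,\ind_{|\sigma|>0}\ind_{|\tau|>0},
\]
while $\E^2[\tilde Z_N^+]$ is the same double sum with $\E[T(\sigma)T(\tau)]$ replaced by $\E[T(\sigma)]\,\E[T(\tau)]$. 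I would feed Lemma \ref{CovT} into the first sum and Lemma \ref{ET} into the second, so that the whole problem reduces to comparing the two summands term by term.

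The key algebraic step is to divide the summands. Writing $x=|\sigma|/N$, $y=|\tau|/N$ and letting $\Delta:=|\sigma\tau|-|\sigma|\,|\tau|/N$ be the deviation of the overlap from its annealed value, Lemmas \ref{ET} and \ref{CovT} give, after the bookkeeping $p\gamma=\beta/(2N)$ and $(p-p^2)\gamma^2=(1-p)\beta^2/(4N^2p)$,
\[
\frac{\E[T(\sigma)T(\tau)]}{\E[T(\sigma)]\,\E[T(\tau)]} = \exp\big(D(\sigma,\tau)+o(1)\big),
\]
where
\[
D(\sigma,\tau) = \frac{(1-p)\beta^2}{4p}\Big((x^2-m^2)(y^2-m^2) + \tfrac{2xy\,\Delta}{N} + \tfrac{\Delta^2}{N^2}\Big).
\]
The point of this factorization is that $D$ vanishes precisely on the typical locus $x=y=m$, $|\sigma\tau|=m^2N$, which is exactly the region carrying the mass of both moments by Proposition \ref{prop:expect}. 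So the whole argument rests on showing $D=o(1)$ there.

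To execute this I would integrate out the overlap first. For fixed $\sigma$, the overlap $|\sigma\tau|=\sum_i\sigma_i\tau_i$ under a uniform $\tau$ of prescribed magnetization is a sampling-without-replacement sum, hence sub-Gaussian with mean $|\sigma|\,|\tau|/N$ and fluctuations of order $\sqrt N$. Recalling from Lemma \ref{CovT} that $b_{12}=(p-p^2)\gamma^2+\mathcal O(p\gamma^3)$ satisfies $b_{12}N=(1-p)\beta^2/(4Np)\to0$, a Hubbard--Stratonovich representation $\eee^{b_{12}u^2}=\sqrt{b_{12}/\pi}\int \eee^{-b_{12}s^2+2b_{12}su}\,\dd s$ together with the sub-Gaussian bound yields, uniformly in the magnetizations, $\E_\tau[\eee^{b_{12}|\sigma\tau|^2}]=(1+o(1))\,\eee^{b_{12}(|\sigma|\,|\tau|/N)^2}$; convergence of the Gaussian integral is guaranteed precisely by $b_{12}N\to0$. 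This reduces the second moment to an effective sum over the two magnetizations with the extra weight $\eee^{b_{12}(|\sigma|\,|\tau|/N)^2}$, which I would compare to the squared first moment via the large-deviation analysis of Proposition \ref{prop:expect}: the extra weight is at most $\eee^{b_{12}N^2}=\eee^{(1-p)\beta^2/(4p)}=\eee^{o(N)}$ (using $1/p=o(N^{1/3})$, since $p^3N\to\infty$) and varies slowly in $|\sigma|,|\tau|$, so its effect on the rate is dominated by the suppression $\eee^{-K_1c_k^2}$ coming from $I$, and the atypical magnetizations drop out.

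The hard part will be the diverging prefactor $(1-p)\beta^2/(4p)$ in $D$. On the typical locus $|x-m|,|y-m|\le\kappa_N/\sqrt N$ the magnetic piece $\tfrac{(1-p)\beta^2}{4p}(x^2-m^2)(y^2-m^2)$ is $O(\kappa_N^2/(pN))=o(1)$ by the choice $\kappa_N=p\sqrt N/(p^3N)^{2/5}$, but the genuinely two-body terms are sharper: with $\Delta$ of its typical size $\sqrt N$, the linear term is of order $\tfrac{(1-p)\beta^2m^2}{2p\sqrt N}$ and the quadratic term of order $\tfrac{(1-p)\beta^2}{4pN}$, so I need $p^2N\to\infty$ — supplied by $p^3N\to\infty$ — to force both into $o(1)$; this is also exactly the condition making the $(1+o(1))$ in the Hubbard--Stratonovich step an $o(1)$ correction in the \emph{exponent}. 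In other words, the overlap must concentrate strongly enough to beat the $1/p$ blow-up; this coupling of $\sigma$ and $\tau$ through $|\sigma\tau|$ is the only place where the summand fails to factorize and the only place where the hypothesis on $p$ is genuinely consumed. Once $D=o(1)$ holds uniformly on the typical locus, the typical contributions to the second moment and to the squared first moment coincide up to $1+o(1)$, and with the atypical parts negligible this gives $\E[(\tilde Z_N^+)^2]=(1+o(1))\,\E^2[\tilde Z_N^+]$, hence $\V(\tilde Z_N^+(\beta,g))=o\big(\E^2[\tilde Z_N^+(\beta,g)]\big)$.
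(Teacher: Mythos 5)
Your proposal is correct and reaches the same conclusion by the same overall second-moment skeleton (show $\E[(\tilde Z_N^+(\beta,g))^2]=(1+o(1))\,\E^2[\tilde Z_N^+(\beta,g)]$, split magnetizations into the same $\kappa_N$-window of typical values and kill atypical ones by the $\eee^{-K_1 c_k^2}$ suppression against the $\eee^{O(1/p)}$ blow-up, which works since $p\kappa_N^2=(p^3N)^{1/5}\to\infty$), but it treats the one genuinely two-body obstruction --- the coupling through $|\sigma\tau|$ --- by a different mechanism. The paper keeps the overlap as a summation variable: it introduces a third typicality constraint $\left|\,|\sigma\tau|-Nm^2\right|\le C'\sqrt N\kappa_N$, counts pairs with prescribed $(|\sigma|,|\tau|,|\sigma\tau|)$ via the hypergeometric conditional probability \eqref{eq:condprob}, and proves the Gaussian bound $\exp\bigl(-(n-\tfrac{kl}{N})^2/(2(N-\tfrac{k^2}{N}))\bigr)$ by Stirling plus convexity of $I$, after which the atypical-overlap case $S_{N,C}^{2c}$ is summed out and, on the typical set, the $|\sigma\tau|^2$-term is frozen to the constant $(1-p)\beta^2m^4/(4p)$. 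You instead integrate the overlap out in one stroke: your ratio identity $\E[T(\sigma)T(\tau)]/(\E T(\sigma)\,\E T(\tau))=\exp(D(\sigma,\tau)+o(1))$ checks out exactly against Lemmas \ref{ET} and \ref{CovT} (expanding your $D$ reproduces $\tfrac{(1-p)\beta^2}{4p}\bigl(m^4-m^2x^2-m^2y^2+|\sigma\tau|^2/N^2\bigr)$), and the Hubbard--Stratonovich computation of $\E_\tau[\eee^{b_{12}|\sigma\tau|^2}]$ is sound: with $b_{12}N=O(1/(Np))\to0$ the Gaussian integral converges, the relative correction in the exponent is $O(b_{12}^2N^3)=O(1/(Np^2))=o(1)$ --- this is where, as you correctly note, $p^2N\to\infty$ is consumed --- and Jensen supplies the matching lower bound (which is anyway not needed, since $\V\geq 0$ means a one-sided bound suffices, as the paper also observes). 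This reduces the double sum to a product of one-body sums times $\exp\bigl(\tfrac{(1-p)\beta^2}{4p}(x^2-m^2)(y^2-m^2)\bigr)$, which is $\eee^{o(1)}$ on the typical window since $\kappa_N^2/(pN)=p/(p^3N)^{4/5}\to0$. The one external input you import is the uniform sub-Gaussian moment generating function bound for $|\sigma\tau|$ under uniform $\tau$ of prescribed magnetization; this is legitimate, since $|\sigma\tau|$ is up to an affine map a sampling-without-replacement sum with mean $kl/N$ and Hoeffding's without-replacement inequality gives variance proxy $O(N)$ uniformly in $k,l$ --- but be aware that this is precisely the fact the paper reproves by hand through \eqref{eq:condprob}. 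In short: your route buys a cleaner factorization in which no third case analysis is needed and the role of each hypothesis ($p^2N\to\infty$ for the two-body terms, $p\kappa_N^2\to\infty$ for atypical magnetizations) is isolated, at the price of invoking a concentration lemma the paper keeps self-contained.
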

\begin{proof}
Obviously,
$$
\V(\tilde Z_N^+(\beta, g))= \E\left[\left(\tilde Z_N^+(\beta, g)\right)^2\right]- \E^2[\tilde Z_N^+(\beta, g)].
$$
The asymptotics of the second term on the right is already known  from Proposition~\ref{prop:expect}.
Our aim is to show that the first term satisfies
$$
\E\left[\left(\tilde Z_N^+(\beta, g)\right)^2\right]
\leq
(1+o(1))\mathbb E^2[\tilde Z_N^+(\beta,g)].
$$
Since the variance cannot become negative, this would imply the assertion.
Introduce the set of typical pairs of spin configurations
$$
S_N^2:=\{(\sigma,\tau): |\sigma|, |\tau|  \in W_{N,m}, \big| |\sigma\tau|-Nm^2 \big| \le C' \sqrt  N  \kappa_N \},
$$
where $W_{N,m}$ is defined as in \eqref{eq:WNm}, $C'$ is a large constant to be specified below,  and we recall that
$
|\sigma\tau|:=\sum_{i=1}^N \sigma_i \tau_i.
$
The pairs of spin configurations $(\sigma, \tau)$ that are not in $S_N^2$  will be called atypical. The set of atypical pairs $(\sigma,\tau)$ that  satisfy $|\sigma|>0$, $|\tau|>0$ will be denoted by $S_N^{2c}$.
We split $\E[(\tilde Z_N^+(\beta, g))^2]$ into the contribution of typical and atypical pairs of spin configurations as follows:
\begin{align*}
\E\left[\left(\tilde Z_N^+(\beta, g)\right)^2\right]&=
\sum_{(\sigma,\tau) \in \{\pm1\}^N \times \{\pm1\}^N }g\left(\frac{|\sigma|-Nm}{\sqrt N}\right)g\left(\frac{|\tau|-Nm}{\sqrt N}\right)\E (T(\sigma)T(\tau))\\
&=\sum_{(\sigma,\tau) \in S_N^2 }g\left(\frac{|\sigma|-Nm}{\sqrt N}\right)g\left(\frac{|\tau|-Nm}{\sqrt N}\right)\E (T(\sigma)T(\tau))\\
& \quad +\sum_{(\sigma,\tau) \in S_N^{2c} }g\left(\frac{|\sigma|-Nm}{\sqrt N}\right)g\left(\frac{|\tau|-Nm}{\sqrt N}\right)\E (T(\sigma)T(\tau)).
\end{align*}
Let us first consider the typical spin configurations. Using Lemma \ref{CovT} we obtain
\begin{align}
&\sum_{(\sigma,\tau) \in S_N^2 }g\left(\frac{|\sigma|-Nm}{\sqrt N}\right)g\left(\frac{|\tau|-Nm}{\sqrt N}\right)\E (T(\sigma)T(\tau))\nonumber \\
&=
\sum_{(\sigma,\tau) \in S_N^2 }g\left(\frac{|\sigma|-Nm}{\sqrt N}\right)g\left(\frac{|\tau|-Nm}{\sqrt N}\right)
\eee^{-2m^2N^2p\gamma+N^2p(1-p)(2m^4+1)\gamma^2+o(1)}\nonumber \\
&\hskip 1.5cm \times \exp\left(\left(p \gamma
+(-2 m^2 p+2 m^2 p^2) \gamma^2\right)(|\sigma|^2+|\tau|^2)+(p - p^2) \gamma^2 |\sigma\tau|^2\right)\nonumber \\
&= \eee^{\frac{(1-p)\beta^2(1+2m^4)}{4p}+o(1)}\sum_{(\sigma,\tau) \in S_N^2 }g\left(\frac{|\sigma|-Nm}{\sqrt N}\right)g\left(\frac{|\tau|-Nm}{\sqrt N}\right)
\nonumber \\
&\hskip 1.5cm \times \eee^{\frac{\beta}{2N}\left((|\sigma|^2-m^2N^2)+(|\tau|^2-m^2N^2)\right)-\frac{(1-p)\beta^2}{4 p}\frac{2m^2(|\sigma|^2+|\tau|^2)}{N^2}+\frac{(1-p)\beta^2}{4N^2p} |\sigma\tau|^2}. \label{eq:est_var1}
\end{align}
Our first observation is that the term with $|\sigma\tau|^2$ can be asymptotically replaced by a term not depending on $\sigma$ and $\tau$. Indeed, for $(\sigma,\tau) \in S_N^2$ we have that
$$
\frac{(1-p)\beta^2}{4N^2p} |\sigma\tau|^2=\frac{(1-p)\beta^2 m^4}{4p}+o(1)
$$
because $\kappa_N = o(pN)$.
Our second observation is that the same can be done for the term involving $|\sigma|^2 + |\tau|^2$ since
$$
\frac{(1-p)\beta^2}{4 p}\frac{2m^2(|\sigma|^2+|\tau|^2)}{N^2}
=
\frac{(1-p)\beta^2}{4 p}\frac{2m^2\cdot 2 N^2m^2}{N^2} +o(1)
=
 \frac{(1-p)\beta^2m^4}{p} +o(1),
$$
where we used that $\kappa_N = o(p\sqrt N)$.
Overall, we get
\begin{multline*}
\sum_{(\sigma,\tau) \in S_N^2 }g\left(\frac{|\sigma|-Nm}{\sqrt N}\right)g\left(\frac{|\tau|-Nm}{\sqrt N}\right)\E (T(\sigma)T(\tau))
\\=
\eee^{\frac{(1-p)\beta^2(1 - m^4)}{4p}+o(1)}\sum_{(\sigma,\tau) \in S_N^2 }g\left(\frac{|\sigma|-Nm}{\sqrt N}\right)g\left(\frac{|\tau|-Nm}{\sqrt N}\right)
\\ \times \eee^{\frac{\beta}{2N}\left((|\sigma|^2-m^2N^2)+(|\tau|^2-m^2N^2)\right)}.
\end{multline*}
Now we have the upper estimate
\begin{align*}
&\sum_{(\sigma,\tau) \in S_N^2 }g\left(\frac{|\sigma|-Nm}{\sqrt N}\right)g\left(\frac{|\tau|-Nm}{\sqrt N}\right)\E (T(\sigma)T(\tau))
\\
&\leq
\eee^{\frac{(1-p)\beta^2(1 - m^4)}{4p}+o(1)}
\sum_{(\sigma,\tau) \in S_N^1 \times S_N^1}g\left(\frac{|\sigma|-Nm}{\sqrt N}\right)g\left(\frac{|\tau|-Nm}{\sqrt N}\right)
\\ & \quad \times
\eee^{\frac{\beta}{2N}\left((|\sigma|^2-m^2N^2)+(|\tau|^2-m^2N^2)\right)}
\\
&\leq \eee^{\frac{(1-p)\beta^2(1 - m^4)}{4p}+o(1)}
\left(\sum_{\sigma \in S_N^1}g\left(\frac{|\sigma|-Nm}{\sqrt N}\right)
\eee^{\frac{\beta}{2N} (|\sigma|^2-m^2N^2)}\right)^2.
\end{align*}
To justify the inequality, observe that in the first line we sum over a smaller set of pairs $(\sigma, \tau)$ because $S_N^2$ involves an additional constraint on $|\sigma\tau|$, and recall that $g\geq 0$.

Now we proceed similarly to the proof of Proposition \ref{prop:expect}. Indeed, in the same way we prove that
$$
\sum_{\sigma \in S_N^1}g\left(\frac{|\sigma|-Nm}{\sqrt N}\right)
\eee^{\frac{\beta}{2N} (|\sigma|^2-m^2N^2)}
=
\eee^{-N I(m)} 2^{N+1}\frac{1+o(1)}{\sqrt{1-m^2}} \sigma(\beta)  \E_{\xi} [g(\xi)].
$$
This, together with the statement of Proposition \ref{prop:expect} shows that
\begin{align*}
\sum_{(\sigma,\tau) \in S_N^2 }g\left(\frac{|\sigma|-Nm}{\sqrt N}\right)g\left(\frac{|\tau|-Nm}{\sqrt N}\right)\E (T(\sigma)T(\tau))
 \leq(1+o(1)) \left(\mathbb E\tilde Z_N^+(\beta,g)\right)^2.
\end{align*}

We will now show that the contribution of the atypical spins to the variance of $\tilde Z_N^+(\beta,g)$ is negligible.
We need to show that
$$
\sum_{(\sigma,\tau) \in S_N^{2c} }g\left(\frac{|\sigma|-Nm}{\sqrt N}\right)g\left(\frac{|\tau|-Nm}{\sqrt N}\right)\E (T(\sigma)T(\tau))=o\left(\left(\E\tilde Z_N^+(\beta,g)\right)^2\right).
$$
Note that the pairs of spin configurations $(\sigma,\tau) \in S_N^{2c}$ either satisfy
$$\left|\, |\sigma|-Nm\right| > \sqrt N \kappa_N
\, \,  \mbox{ or, } \, \,
\left|\, |\tau|-Nm\right| > \sqrt N \kappa_N
\, \,  \mbox{ or, }\, \,
\left|\, |\sigma\tau|-Nm^2\right| >  C' \sqrt  N  \kappa_N.
$$
In the case when
$$
\left|\, |\sigma|-Nm\right| > \sqrt N \kappa_N \, \,  \text{ or} \, \,  \left|\, |\tau|-Nm\right| > \sqrt N \kappa_N
$$
 we can proceed similarly as in the proof of Proposition \ref{prop:expect}. For concreteness,
 let us assume that we consider the situation where
$|\, |\sigma|-Nm| > \sqrt N \kappa_N$ and $\tau$ is
arbitrary
and let us denote the corresponding set of spin configurations by
$S_{N,A}^{2c}$. Then, starting from \eqref{eq:est_var1}, we
estimate
\begin{align*}
& \sum_{(\sigma,\tau) \in S_{N,A}^{2c} }g\left(\frac{|\sigma|-Nm}{\sqrt N}\right)g\left(\frac{|\tau|-Nm}{\sqrt N}\right)\E (T(\sigma)T(\tau)) \\
&\quad \le
\eee^{\frac{(1-p)\beta^2(1+2m^4)}{4p}+o(1)}\|g\|_{\infty} \sum_{(\sigma,\tau)  \in S_{N,A}^{2c}}
g\left(\frac{|\tau|-Nm}{\sqrt N}\right) \eee^{\frac{\beta}{2N}\left((|\sigma|^2-m^2N^2)+(|\tau|^2-m^2N^2)\right)}\\
&\hskip6.6cm \times \eee^{-\frac{(1-p)\beta^2}{4 p}\frac{2m^2(|\sigma|^2+|\tau|^2)}{N^2}+\frac{(1-p)\beta^2}{4N^2p} |\sigma\tau|^2} \\
&\leq \eee^{\frac{(1-p)\beta^2(2+2m^4)}{4p}+o(1)}\|g\|_{\infty}
\sum_{(\sigma,\tau)  \in S_{N,A}^{2c}} g\left(\frac{|\tau|-Nm}{\sqrt N}\right)
\eee^{\frac{\beta}{2N}\left((|\sigma|^2-m^2N^2)+(|\tau|^2-m^2N^2)\right)}
 \\
&\hskip6.6cm \times \eee^{-\frac{(1-p)\beta^2}{4 p}\frac{2m^2(|\sigma|^2+|\tau|^2)}{N^2}}
\end{align*}
because $\frac{(1-p)\beta^2}{4N^2p} |\sigma\tau|^2 \le  \frac{(1-p)\beta^2}{4p}$. Thus
\begin{align*}
& \sum_{(\sigma,\tau) \in S_{N,A}^{2c} }g\left(\frac{|\sigma|-Nm}{\sqrt N}\right)g\left(\frac{|\tau|-Nm}{\sqrt N}\right)\E (T(\sigma)T(\tau))
\\
&\le \eee^{\frac{(1-p)\beta^2(2+2m^4)}{4p}+o(1)}\|g\|_{\infty}
\\
&\times
\sum_{(k,l):\atop |k-Nm| > \sqrt N \kappa_N}
g\left(\frac{l-Nm}{\sqrt N}\right)
\eee^{\frac{\beta}{2N}\left((k^2-m^2N^2)+(l^2-m^2N^2)\right)-\frac{(1-p)\beta^2}{4p}
\frac{2m^2(k^2+l^2)}{N^2}}\binom{N}{\frac{N+k}2}
\binom{N}{\frac{N+l}2}
\\
&\le \eee^{\frac{(1-p)\beta^2(2+2m^4)}{4p}}2^{N} \|g\|_{\infty}
\sum_{k: |k-Nm| > \sqrt N \kappa_N}
\eee^{\frac{\beta}{2N}(k^2-m^2N^2)-\frac{(1-p)\beta^2}{4 p}\frac{2m^2k^2}{N^2}-NI\left(\frac kN\right)}\\
&\hskip5cm \times \sum_l g\left(\frac{l-Nm}{\sqrt N}\right)
\eee^{\frac{\beta}{2N}(l^2-m^2N^2)-\frac{(1-p)\beta^2}{4 p}\frac{2m^2l^2}{N^2}}\binom{N}{\frac{N+l}2}
\end{align*}
where the last step follows from
\begin{equation*}
2^{-N}\binom{N}{\frac{N+k}2}\le \exp\left(-NI\left(\frac kN\right)\right).
\end{equation*}
Note that as in Proposition \ref{prop:expect}, especially equation \eqref{eq:first step exp} and the following equations we obtain that
\begin{align*}
&\eee^{\frac{(1-p)\beta^2(2+2m^4)}{4p}}\sum_l g\left(\frac{l-Nm}{\sqrt N}\right)
\eee^{\frac{\beta}{2N}(l^2-m^2N^2)-\frac{(1-p)\beta^2}{4 p}\frac{2m^2l^2}{N^2}}\binom{N}{\frac{N+l}2}
\\ &\quad =
(1+o(1))\eee^{\frac{(1-p)\beta^2}{2p}-NI(m)}2^{N+1}\frac 1 {\sqrt{1-m^2}}\sigma(\beta)\E_\xi[g(\xi)]\\
&\quad =(1+o(1))\eee^{\frac{(1-p)\beta^2(3+m^4)}{8p}}
\E \tilde Z_N^+(\beta,g).
\end{align*}
This implies that
\begin{align*}
& \sum_{(\sigma,\tau) \in S_{N,A}^{2c} }g\left(\frac{|\sigma|-Nm}{\sqrt N}\right)g\left(\frac{|\tau|-Nm}{\sqrt N}\right)\E (T(\sigma)T(\tau)) \\
&  \le (1+o(1))\E \tilde Z_N^+(\beta,g) \eee^{\frac{(1-p)\beta^2(3+m^4)}{8p}}
2^{N} \|g\|_{\infty}
\sum_{k: |k-Nm| > \sqrt N \kappa_N}
\eee^{\frac{\beta}{2N}(k^2-m^2N^2)-\frac{(1-p)\beta^2}{4 p}\frac{2m^2k^2}{N^2}-NI\left(\frac kN\right)}\\
&  \le (1+o(1))\E \tilde Z_N^+(\beta,g) \eee^{\frac{(1-p)\beta^2(3+m^4)}{8p}}
2^{N} \|g\|_{\infty}
\sum_{k: |k-Nm| > \sqrt N \kappa_N}
\eee^{\frac{\beta}{2N}(k^2-m^2N^2)-NI\left(\frac kN\right)}.
\end{align*}
But following the steps in \eqref{eq:Iaussen}, \eqref{eq:expectaussen}, and \eqref{eq:expectaussen2} we see that
\begin{align*}
&\eee^{\frac{(1-p)\beta^2(3+m^4)}{8p}} 2^{N} \sum_{k:|k-Nm| > \sqrt N \kappa_N}
\eee^{\frac{\beta}{2N}\left(k^2-m^2N^2\right)-NI\left(\frac kN\right)}\\
=&\eee^{\frac{(1-p)\beta^2(2+2m^4)}{8p}}\eee^{\frac{(1-p)\beta^2(1-m^4)}{8p}}2^{N} \sum_{k:|k-Nm| > \sqrt N \kappa_N}
\eee^{\frac{\beta}{2N}\left(k^2-m^2N^2\right)-NI\left(\frac kN\right)}\\
&\le \eee^{\frac{(1-p)\beta^2(2+2m^4)}{8p}} \eee^{\frac{(1-p)\beta^2}{8 p}(-m^4+1)}2^N
\eee^{-N I(m)}\sum_{
k \in W_{N,m}^c}
\eee^{-K_2 c_k^2}
\end{align*}
with the set $W_{N,m}^c$ defined as in the proof of Proposition \ref{prop:expect}.
But
$$
\eee^{\frac{(1-p)\beta^2(2+2m^4)}{8p}} \sum_{k \in W_{N,m}^c} \eee^{-K_2 c_k^2}\le C \eee^{\frac{(1-p)\beta^2(2+2m^4)}{8p}}
\eee^{-K_2 \kappa_N^2} \to 0
$$
because $\kappa_N^2 p \to\infty$. Together with Proposition \ref{prop:expect} this shows that
$$
\sum_{(\sigma,\tau) \in S_{N,A}^{2c} }g\left(\frac{|\sigma|-Nm}{\sqrt N}\right)g\left(\frac{|\tau|-Nm}{\sqrt N}\right)\E (T(\sigma)T(\tau))=o\left(\left(\E \tilde Z_N^+(\beta,g)^2\right)\right).
$$
Hence the contribution of the pairs of spin configurations from $S_{N,A}^{2c}$ is asymptotically negligible.

The contributions of pairs of spin configurations from $S_{N,B}^{2c}$, the set where
$$|\, |\tau|-Nm| > \sqrt N \kappa_N $$
and  $\sigma$ is arbitrary, is bounded in the same way.

It remains to estimate the contribution of the pairs of spin configurations of the set
\begin{multline*}
S_{N,C}^{2c}:=\left\{(\sigma, \tau) \in S_{N}^{2c}:
\left|\, |\sigma|-Nm\right| \le \sqrt N \kappa_N, \right. \\ \left.
\left|\, |\tau|-Nm\right| \le \sqrt N \kappa_N,
\left|\,|\sigma\tau|-Nm^2\right| >  C' \sqrt  N  \kappa_N\right\}.
\end{multline*}
Let us denote by $R_{N,C}^{2c}$ the set of possible values $(k,l,n)$ the vector $(|\sigma|,|\tau|,|\sigma\tau|)$ can take,
when $(\sigma,\tau) \in S_{N,C}^{2c}$, formally
$$
R_{N,C}^{2c}:=\{(k,l,n):\exists (\sigma,\tau) \in S_{N,C}^{2c} \mbox{ with }(k,l,n)=(|\sigma|,|\tau|,|\sigma\tau|)\}.
$$
Moreover, denote by $V_N(k,l,n)$ the set of pairs
$$(\sigma,\tau)\in \{\pm 1\}^N\times\{\pm 1\}^N \quad \mbox{for which }|\sigma|=k,|\tau|= l, \mbox{and }|\sigma \tau| = n$$
and set $\nu_N(k,l,n) := \# V_N(k,l,n)$. Note in particular that by the definition of $|\sigma|, |\tau|$ and $|\sigma \tau|$ we have
\begin{equation}\label{eq:relationk_l_n}
-(N+k) \leq l+n \leq N+k \quad \text{and} \quad -(N-k) \leq l-n \leq N-k.
\end{equation}

In order to treat the corresponding contribution we need to compute the distribution of $|\sigma\tau|$ in greater detail.
We begin by using Lemma \ref{CovT} again:
\begin{align*}
&\sum_{(\sigma,\tau) \in S_{N,C}^{2c} }g\left(\frac{|\sigma|-Nm}{\sqrt N}\right)g\left(\frac{|\tau|-Nm}{\sqrt N}\right)\E (T(\sigma)T(\tau))\\
&=
\eee^{\frac{(1-p)\beta^2(1+2m^4)}{4p}+o(1)}\sum_{(\sigma,\tau)  \in S_{N,C}^{2c}}
g\left(\frac{|\sigma|-Nm}{\sqrt N}\right)g\left(\frac{|\tau|-Nm}{\sqrt N}\right)\\
&\qquad \quad  \times \eee^{\frac{\beta}{2N}\left((|\sigma|^2-m^2N^2)+(|\tau|^2-m^2N^2)\right)-\frac{(1-p)\beta^2}{4 p}\frac{2m^2(|\sigma|^2+|\tau|^2)}{N^2}+\frac{(1-p)\beta^2}{4N^2p} |\sigma\tau|^2}\\
&= \eee^{\frac{(1-p)\beta^2(1+2m^4)}{4p}+o(1)}
\sum_{(k,l,n)\in R_{N,C}^{2c}}\sum_{(\sigma, \tau)\in V_N(k,l,n)}g\left(\frac{|\sigma|-Nm}{\sqrt N}\right)g\left(\frac{|\tau|-Nm}{\sqrt N}\right) \\ &\qquad \times \eee^{\frac{\beta}{2N}\left((|\sigma|^2-m^2N^2)+(|\tau|^2-m^2N^2)\right)-\frac{(1-p)\beta^2}{4 p}\frac{2m^2(|\sigma|^2+|\tau|^2)}{N^2}+\frac{(1-p)\beta^2}{4N^2p} |\sigma\tau|^2}\\
&= \eee^{\frac{(1-p)\beta^2(1+2m^4)}{4p}+o(1)}
\sum_{(k,l,n)\in R_{N,C}^{2c}} g\left(\frac{k-Nm}{\sqrt N}\right)g\left(\frac{l-Nm}{\sqrt N}\right) \\ & \qquad \times
\eee^{\frac{\beta}{2N}\left((k^2-m^2N^2)+(l^2-m^2N^2)\right)-\frac{(1-p)\beta^2}{4 p}\frac{2m^2(k^2+l^2)}{N^2}+\frac{(1-p)\beta^2}{4N^2p} n^2} \nu_N(k,l,n).
\end{align*}
As observed in \cite{KaLS19b}
$\nu_N(k,l,n)$ divided by $2^{2N}$ is  a probability mass function, which can be written in terms of a  conditional probability:
\begin{align*}
2^{-2N} \nu_N(k,l,n)&= \P_{\text{unif}}(|\sigma|=k, |\tau|=l, |\sigma\tau|=n)\\
&=
 \P_{\text{unif}}(|\sigma \tau|=n \,\Big|\, |\sigma|=k, |\tau|=l)\P(|\sigma|=k)\P( |\tau|=l)\\
&= 2^{-N}\binom{N}{\frac{N+k}2}2^{-N}\binom{N}{\frac{N+l}2}\P(|\sigma \tau|=n \,\Big|\, |\sigma|=k, |\tau|=l).
\end{align*}
Here, $\P_{\text{unif}}$ denotes  the probability distribution under which $(\sigma,\tau)$ is uniformly distributed on $\{\pm 1\}^{N}\times \{\pm 1\}^{N}$. Using the hypergeometric distribution, we can express the conditional probability $\P(|\sigma \tau|=n \,\Big|\, |\sigma|=k, |\tau|=l)$ as the following fraction:
\begin{equation}\label{eq:condprob}
\P(|\sigma \tau|=n \,\Big|\, |\sigma|=k, |\tau|=l)=
\frac{\binom{\frac{N+k}2}{\frac{N+k+l+n}4}\binom{\frac{N-k}2}{\frac{N+l-k-n}4}}{\binom{N}{\frac{N+l}2}}.
\end{equation}
We will use
\begin{equation*}
c N^{-1/2} 2^N \eee^{-N I(\frac k N) - \lambda_N(k)} \leq \binom{N}{\frac{N+k}2} \le C N^{-1/2} 2^N \eee^{-N I(\frac k N) - \lambda_N(k)},
\qquad |k|\leq N
\end{equation*}
for some constants $c,C>0$ which
was shown in \cite{KaLS19b}, Eqn.~(4.11), as a consequence of Stirling's formula.
Here
$$
\lambda_N(k) :=  \frac 12 \log \left(\frac{(N+1)^2 - k^2}{N^2}\right).
$$
With this formula
we can treat the binomial coefficients in \eqref{eq:condprob} (where we bound the $\log$-correction in the exponent of the denominator by $0$) to obtain
\begin{multline*}
\P
(|\sigma \tau |=n \,\Big|\, |\sigma |=k, |\tau |=l)
\le C \sqrt{\frac{N}{(N+k)(N-k)}} \\
\times \eee^{-N\left(\frac{N+k}{2N}I(\frac{l+n}{N+k})+\frac{N-k}{2N}I(\frac{l-n}{N-k})-I(\frac{l}{N})\right)-
\lambda_{\frac{N+k}{2}}(\frac{n+l}{2}) - \lambda_{\frac{N-k}{2}}(\frac{l-n}{2})}
\end{multline*}
for some positive constant $C$.
Note that $k=N$ or $l=N$ is excluded by the definition of $R_{N,C}^{2c}$. Moreover, on $R_{N,C}^{2c}$ we have that $\sqrt{\frac{N}{(N+k)(N-k)}}\le C/\sqrt N$ (again for another $C>0$).
Thus
\begin{align*}
&\sum_{(\sigma,\tau) \in S_{N,C}^{2c} }g\left(\frac{|\sigma|-Nm}{\sqrt N}\right)g\left(\frac{|\tau|-Nm}{\sqrt N}\right)\E (T(\sigma)T(\tau))\\
&\le C \eee^{\frac{(1-p)\beta^2(1+2m^4)}{4p}}
\sum_{(k,l,n)\in R_{N,C}^{2c}}g\left(\frac{k-Nm}{\sqrt N}\right)g\left(\frac{l-Nm}{\sqrt N}\right)
\\& \qquad \times
\eee^{\frac{\beta}{2N}\left((k^2-m^2N^2)+(l^2-m^2N^2)\right)-\frac{(1-p)\beta^2}{4 p}\frac{2m^2(k^2+l^2)}{N^2}+\frac{(1-p)\beta^2}{4N^2p} n^2}
\binom{N}{\frac{N+k}2}\binom{N}{\frac{N+l}2}
\\& \quad \times
 \frac{1}{\sqrt N} \eee^{-N\left(\frac{N+k}{2N}I(\frac{l+n}{N+k})+\frac{N-k}{2N}I(\frac{l-n}{N-k})-I(\frac{l}{N})\right)-
\lambda_{\frac{N+k}{2}}(\frac{n+l}{2}) - \lambda_{\frac{N-k}{2}}(\frac{l-n}{2})}\\
&= C \eee^{\frac{(1-p)\beta^2(1+3m^4)}{4p}}
\sum_{(k,l,n)\in R_{N,C}^{2c}}g\left(\frac{k-Nm}{\sqrt N}\right)g\left(\frac{l-Nm}{\sqrt N}\right)
\\& \qquad \times
\eee^{\frac{\beta}{2N}\left((k^2-m^2N^2)+(l^2-m^2N^2)\right)-\frac{(1-p)\beta^2}{4 p}\frac{2m^2(k^2+l^2)}{N^2}+\frac{(1-p)\beta^2}{4N^2p} (n^2-m^4N^2)}
\binom{N}{\frac{N+k}2}\binom{N}{\frac{N+l}2}
\\& \quad \times
 \frac{1}{\sqrt N} \eee^{-N\left(\frac{N+k}{2N}I(\frac{l+n}{N+k})+\frac{N-k}{2N}I(\frac{l-n}{N-k})-I(\frac{l}{N})\right)-
\lambda_{\frac{N+k}{2}}(\frac{n+l}{2}) - \lambda_{\frac{N-k}{2}}(\frac{l-n}{2})}.
\end{align*}
Now, borrowing an idea from \cite{KaLS19b} we observe that
$$\frac{l}{N}= \frac{N-k}{2N} \frac{l-n}{N-k} +  \frac{N+k}{2N} \frac{l+n}{N+k},$$
i.e. $\frac{l}{N}$ is a convex combination  of $\frac{l+n}{N+k}$ and $\frac{l-n}{N-k}$ with weights $\frac{N+k}{2N}$ and $\frac{N-k}{2N}$, respectively. On the other hand, $I$ is a convex function, and, even more, considering its Taylor expansion
\begin{equation*}
NI(l/N)= \frac{l^2}{2N}+ \sum_{j\ge 2 }d_{2j}\frac{l^{2j}}{N^{2j-1}}
\end{equation*}
with positive coefficients $d_{2j}$ we see that it is a positive linear combination of convex functions. Using that $d_2=\frac 12$ we obtain
\begin{align*}
&-N\left(\frac{N+k}{2N}I\left(\frac{l+n}{N+k}\right)+\frac{N-k}{2N}I\left(\frac{l-n}{N-k}\right)-I\left(\frac{l}{N}\right)\right)
\\
=&-N \left(\frac{N+k}{2N}\sum_{j=1}^\infty d_{2j} \left(\frac{l+n}{N+k}\right)^{2j}+
\frac{N-k}{2N}\sum_{j=1}^\infty d_{2j}\left(\frac{l-n}{N-k}\right)^{2j}-\sum_{j=1}^\infty d_{2j} \left(\frac{l}{N}\right)^{2j}\right)
\\
\le& -\frac N2 \left(\frac{N+k}{2N}\left(\frac{l+n}{N+k}\right)^2+
\frac{N-k}{2N}\left(\frac{l-n}{N-k}\right)^2-\left(\frac{l}{N}\right)^2\right)
\\= &-\frac{1}{2} \frac{(Nn-lk)^2}{N (N^2-k^2)}=-\frac{1}{2} \frac{(n-\frac{lk}{N})^2}{ (N-\frac{k^2}{N})} ,
\end{align*}
where for the inequality we used that for each $j\geq 2$
$$
\frac{N+k}{2N}d_{2j} \left(\frac{l+n}{N+k}\right)^{2j}+
\frac{N-k}{2N}d_{2j}\left(\frac{l-n}{N-k}\right)^{2j}-d_{2j} \left(\frac{l}{N}\right)^{2j}\ge 0.
$$
Thus
\begin{align*}
&\eee^{-N\left(\frac{N+k}{2N}I(\frac{l+n}{N+k})+\frac{N-k}{2N}I(\frac{l-n}{N-k})-I(\frac{l}{N})\right)-
\lambda_{\frac{N+k}{2}}(\frac{n+l}{2}) - \lambda_{\frac{N-k}{2}}(\frac{l-n}{2})}
\\
\le &
\exp\left(-\frac{(n-\frac{kl}N)^2}{2(N-\frac{k^2}N)}
-
\lambda_{\frac{N+k}{2}}\left(\frac{n+l}{2}\right) - \lambda_{\frac{N-k}{2}}\left(\frac{l-n}{2}\right)
\right)\\
\le &C \exp\left(-\frac{(n-\frac{kl}N)^2}{2(N-\frac{k^2}N)}\right).
\end{align*}
We used that $\eee^{ - \lambda_{\frac{N+k}{2}}(\frac{n+l}{2}) - \lambda_{\frac{N-k}{2}}(\frac{l-n}{2})}\le C$
as by the definition of $\lambda$ and the lower bounds in \eqref{eq:relationk_l_n}
$$
\eee^{ - \lambda_{\frac{N+k}{2}}(\frac{n+l}{2})}= \left(  1+\frac{1}{2} \frac{n+l}{N+k}\right)^{-\frac{1}{2}} \leq C
$$
and
$$
\eee^{ - \lambda_{\frac{N-k}{2}}(\frac{l-n}{2})}= \left(  1+\frac{1}{2} \frac{l-n}{N-k}\right)^{-\frac{1}{2}} \leq C.
$$
Therefore
\begin{align}  \label{eq:sumovern}
&\sum_{(\sigma,\tau) \in S_{N,C}^{2c} }g\left(\frac{|\sigma|-Nm}{\sqrt N}\right)g\left(\frac{|\tau|-Nm}{\sqrt N}\right)\E (T(\sigma)T(\tau))\notag\\
&\le C \eee^{\frac{(1-p)\beta^2(1+3m^4)}{4p}}
\sum_{(k,l,n)\in R_{N,C}^{2c}}g\left(\frac{|\sigma|-Nm}{\sqrt N}\right)g\left(\frac{|\tau|-Nm}{\sqrt N}\right)
\\& \qquad \times
\eee^{\frac{\beta}{2N}\left((k^2-m^2N^2)+(l^2-m^2N^2)\right)-\frac{(1-p)\beta^2}{4 p}\frac{2m^2(k^2+l^2)}{N^2}+\frac{(1-p)\beta^2}{4N^2p} (n^2-m^4N^2)}
\binom{N}{\frac{N+k}2}\binom{N}{\frac{N+l}2}
\notag\\& \quad \times
 \frac{1}{\sqrt N} \exp\left(-\frac{(n-\frac{kl}N)^2}{2(N-\frac{k^2}N)}\right).\notag
\end{align}
Note that on $R_{N,C}^{2c}$ we have that  $\frac{kl}N$  differs from $Nm^2$ by at most $C\sqrt N\kappa_N$ for some constant $C$. On the other hand, $n$ differs from $Nm^2$ by at least $C'\sqrt N\kappa_N$ by our definition of $S_{N,C}^{2c}$. Choosing $C':=2C$
we have that $|n-\frac{kl}N|\geq C\sqrt N\kappa_N$ and hence
$$
\frac{1}{\sqrt N} \exp\left(-\frac{(n-\frac{kl}N)^2}{2(N-\frac{k^2}N)}\right)
\leq
\exp\left(-C^2 N \kappa_N^2/(2N)\right)
=
\exp\left(-C^2 \kappa_N^2/2\right).
$$
On the other hand,
$$
\frac{(1-p)\beta^2}{4N^2p} (n^2-m^4N^2) \le K/p
$$
for some constant $p$. Hence the sum over $n$ on the right hand side of \eqref{eq:sumovern} (which contains at most $N$ summands) can be bounded by
$$
N \exp\left(-C^2 \kappa_N^2/2+\frac{K}{p}\right)=o(1)
$$
The latter is true, since $\frac 1 p =o (\kappa_N^2)$, because  $p \kappa_N^2 \to \infty$ as $N \to \infty$
and $$N \exp\left(-C^2 \kappa_N^2/2\right) = o(1).$$ Thus we have that
\begin{align*}
&\sum_{(\sigma,\tau) \in S_{N,C}^{2c} }g\left(\frac{|\sigma|-Nm}{\sqrt N}\right)g\left(\frac{|\tau|-Nm}{\sqrt N}\right)\E (T(\sigma)T(\tau))\\
&\le o(1)
\sum_{k,l\in  W_{N,m}} \eee^{\frac{(1-p)\beta^2(1+3m^4)}{4p}} g\left(\frac{|\sigma|-Nm}{\sqrt N}\right)g\left(\frac{|\tau|-Nm}{\sqrt N}\right)
\\& \qquad \times
\eee^{\frac{\beta}{2N}\left((k^2-m^2N^2)+(l^2-m^2N^2)\right)-\frac{(1-p)\beta^2}{4 p}\frac{2m^2(k^2+l^2)}{N^2}+\frac{(1-p)\beta^2}{4N^2p} (n^2-m^4N^2)}
\binom{N}{\frac{N+k}2}\binom{N}{\frac{N+l}2}.
\end{align*}
Following the lines of the proof of Proposition~\ref{prop:expect}, we see that for $k,l \in W_{N,m}$ we have
$$
\frac{(1-p)\beta^2}{4 p}\frac{2m^2(k^2+l^2)}{N^2}= \frac{(1-p)\beta^2 m^4}{p}+o(1).
$$
Hence
\begin{align*}
&\sum_{k,l\in  W_{N,m}} \eee^{\frac{(1-p)\beta^2(1+3m^4)}{4p}} g\left(\frac{|\sigma|-Nm}{\sqrt N}\right)g\left(\frac{|\tau|-Nm}{\sqrt N}\right)
\\& \qquad \times
\eee^{\frac{\beta}{2N}\left((k^2-m^2N^2)+(l^2-m^2N^2)\right)-\frac{(1-p)\beta^2}{4 p}\frac{2m^2(k^2+l^2)}{N^2}}
\binom{N}{\frac{N+k}2}\binom{N}{\frac{N+l}2}\\
&= \sum_{k,l\in  W_{N,m}} \eee^{\frac{(1-p)\beta^2(1-m^4)}{4p}} g\left(\frac{|\sigma|-Nm}{\sqrt N}\right)g\left(\frac{|\tau|-Nm}{\sqrt N}\right)
\\& \qquad \times
\eee^{\frac{\beta}{2N}\left((k^2-m^2N^2)+(l^2-m^2N^2)\right)}
\binom{N}{\frac{N+k}2}\binom{N}{\frac{N+l}2}
\end{align*}
and -- as in Proposition \ref{prop:expect} -- the
sum on the right-hand side
is bounded above by
$[\mathbb E\tilde Z_N^+(\beta, g)]^2$.
Thus
$$
\sum_{(\sigma,\tau) \in S_{N,C}^{2c} }g\left(\frac{|\sigma|}{\sqrt N}\right)g\left(\frac{|\tau|}{\sqrt N}\right)\E (T(\sigma)T(\tau)) =o\left(\left(\mathbb E \tilde Z_N^+(\beta, g)\right)^2 \right),
$$
which shows that also the contribution from $S_{N,C}^{2c}$ is negligible.
This finishes the proof.
\end{proof}

We are now ready to prove Theorem \ref{theo:b>1}.
\begin{proof}[Proof of Theorem \ref{theo:b>1}]
Proposition \ref{prop:var} shows that, when $p^3 N\to \infty$, we have that $\V(\tilde Z_N^+(\beta,g))= o((\E \tilde Z_N^+(\beta, g))^2)$ for all non-negative $g \in \mathcal{C}_b(\R)$, $g\not  \equiv 0$.
This immediately implies
\begin{equation*}
\frac{ \tilde Z_N^+(\beta, g)}{\E \tilde Z_N^+(\beta, g)} \to 1
\end{equation*}
in $L^2$, for all non-negative $g \in \mathcal{C}_b(\R)$, $g\not  \equiv 0$.
By Chebyshev's inequality, this implies that
$$
\frac{ \tilde Z_N^+(\beta, g)}{\E \tilde Z_N^+(\beta, g)}\to 1
$$
in probability. Recall from~ Proposition \ref{prop:expect} that
$$
\lim_{N \to \infty} \frac{\E \tilde Z_N^+(\beta, g)}{\eee^{\frac{(1-p)\beta^2}{8 p}(1-m^4)-N I(m)} 2^{N+1}\frac{1}{\sqrt{1-m^2}} \sigma(\beta)  \E_{\xi} [g(\xi)]}=1.
$$
Arguing in the same way but without the restriction to $|\sigma|>0$ we get
\begin{equation}\label{eq:in_probab_noplus}
\frac{ \tilde Z_N(\beta)}{\E \tilde Z_N(\beta)}\to 1
\end{equation}
in probability and
\begin{equation}\label{eq:E_tilde_Z_N}
\lim_{N \to \infty} \frac{\E \tilde Z_N(\beta)}{\eee^{\frac{(1-p)\beta^2}{8 p}(1-m^4)-N I(m)} 2^{N+1}\frac{1}{\sqrt{1-m^2}} \sigma(\beta)}=2.
\end{equation}
Note that the limit equals $2$ because the summation can be split into two sums
over configurations with $|\sigma|>0$ and $|\sigma|\leq 0$ having the same asymptotic behavior.

Recall that for the convergence of the random probability measure $L_N^+$ defined in~\eqref{eq:def_L_N}
we need to consider its integral against all non-negative $g \in \mathcal{C}_b(\R)$.
Moreover recall that, according to \eqref{eq_int_against_L_N} and \eqref{eq:quotien_Z_tilde}  we have
$$
\int_{0}^{+\infty} g(x) L_N^+(d x)
=
2\E_{\mu_\beta }\left[ g\left( \frac{\sum_{i=1}^N \sigma_i-Nm}{\sqrt N} \right)\right]
=
\frac{\tilde Z_N^+(\beta, g)}{\frac 12 \tilde Z_N(\beta)}.
$$
The above claims and Slutsky's lemma yield
\begin{equation}\label{eq:proof_th_1_1}
\lim_{N\to\infty}\int_{0}^{+\infty} g(x) L_N^+(d x)
=
\lim_{N\to\infty} \frac{ \E \tilde Z_N^+(\beta, g)}{\frac 12 \E \tilde Z_N(\beta)}
=
 \E_\xi [g(\xi)]
\end{equation}
in probability, where
$\xi$ denotes a normally distributed random variable with expectation $0$ and variance $\sigma^2(\beta)$.
Hence, we have shown that the measure  $L_N^+$, considered as a random element of the
space of finite measures, converges in probability to a normal distribution
with mean $0$ and variance $\sigma^2(\beta)$. This is the assertion of Theorem \ref{theo:b>1}.
\end{proof}
\section{Proof of Theorem \ref{theo:clt_magn_field}}\label{sec:proof_h_positive}
We go through the proof of Theorem \ref{theo:b>1} and indicate where we need to adjust the arguments.
At the beginning of Section \ref{sec_proof_th_1-1} we already defined $T$ for $h \neq 0$. Now, in the definition of $T$ and in the rest of the proof we use
\begin{equation*}
m=m^+(\beta, h)
\end{equation*}
and
\begin{equation*}
T(\sigma) := \exp\left(\gamma \sum_{i,j=1}^N \vep_{i,j} \sigma_i \sigma_j
-\gamma m^2 \sum_{i,j=1}^N \vep_{i,j} + \beta h \sum_{i=1}^N \sigma_i \right)
\end{equation*}
and
\begin{equation*}
\tilde Z_N(\beta, g):= \sum_{\sigma \in \{-1, +1\}^N} g\left(\frac{|\sigma|-Nm}{\sqrt N}\right)T(\sigma).
\end{equation*}

In the expansion of $\mathbb E T(\sigma)$, we get an additional deterministic term $\exp(\beta h |\sigma|)$, i.e.~we get immediately
\begin{lemma}
Assume $h>0, \beta>0$. Then for all $\sigma\in \{-1,+1\}^N$ we have
\begin{multline*}
\E T(\sigma)=
\exp\left(\frac{\beta}{2N}(|\sigma|^2-m^2N^2)+ \frac{(1-p)\beta^2}{8 p^2}
\left(m^4-\frac{2m^2|\sigma|^2}{N^2}+1\right)\right)
\\ \times \exp\left(h\beta |\sigma| + o(1)\right)
\end{multline*}
with an $o(1)$-term that is uniform over $\sigma\in \{-1,+1\}^N$.
\end{lemma}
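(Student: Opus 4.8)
The plan is to reduce the statement to Lemma \ref{ET}, since the only modification relative to the case $h=0$ is the extra summand $\beta h \sum_{i=1}^N \sigma_i$ inside the exponent defining $T(\sigma)$. Crucially, this term does not involve the disorder variables $\vep_{i,j}$ and is therefore deterministic, so it pulls out of the expectation. Writing $\sum_{i=1}^N \sigma_i = |\sigma|$, I would first record that
$$
\E T(\sigma) = \eee^{\beta h |\sigma|}\, \E\!\left[\exp\!\left(\gamma \sum_{i,j=1}^N \vep_{i,j}\sigma_i\sigma_j - \gamma m^2 \sum_{i,j=1}^N \vep_{i,j}\right)\right].
$$
The remaining factor is exactly the object treated in Lemma \ref{ET}, with the single difference that now $m = m^+(\beta,h)$ rather than $m^+(\beta)$.

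Next I would observe that the proof of Lemma \ref{ET} never uses the defining equation for $m$: it treats $m$ merely as a fixed real constant. Hence every step carries over verbatim. Concretely, by independence of the edges the expectation factorizes as $\prod_{i,j=1}^N(1-p+p\,\eee^{\gamma(\sigma_i\sigma_j-m^2)}) = \exp(\sum_{i,j}f(\sigma_i\sigma_j))$, where $f(x) := F(p,\gamma(x-m^2))$ and $F$ is the function from \eqref{eq:centralfunc}. Since $\sigma_i\sigma_j \in \{\pm 1\}$, one linearizes $f$ on the two-point set $\{\pm 1\}$ as $f(x) = a_0 + a_1 x$, which yields $\exp(N^2 a_0 + a_1|\sigma|^2)$. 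The coefficients $a_0 = \tfrac12(f(1)+f(-1))$ and $a_1 = \tfrac12(f(1)-f(-1))$ are then expanded via Corollary \ref{coro_coefficients}, and substituting $\gamma = \beta/(2Np)$ reproduces precisely the disorder-dependent exponent appearing in Lemma \ref{ET}, with a remainder that is $o(1)$ uniformly in $\sigma$.

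Multiplying the two factors back together then gives the asserted formula: the deterministic factor contributes $\eee^{\beta h |\sigma|}$, and the disorder-dependent factor contributes the same Gaussian-type exponent as in the field-free case. I expect essentially no obstacle here; the one point deserving a word is the uniformity of the $o(1)$ term, but this is inherited directly from the uniform $\mathcal O(p\gamma^3)$ bounds in Corollary \ref{coro_coefficients} together with $p^3 N \to \infty$, exactly as in Lemma \ref{ET}. This is why the lemma follows immediately once the field term has been split off.
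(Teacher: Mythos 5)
Your proposal is correct and is exactly the paper's argument: the authors likewise note that the field term $\beta h\sum_i \sigma_i$ is deterministic, factor out $\eee^{\beta h|\sigma|}$, and invoke the computation of Lemma~\ref{ET} verbatim with $m=m^+(\beta,h)$, which indeed never uses the defining equation for $m$. Note only that your argument (correctly) produces the prefactor $\frac{(1-p)\beta^2}{8p}$ as in Lemma~\ref{ET}; the $\frac{(1-p)\beta^2}{8p^2}$ appearing in the statement is a typo in the paper.
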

The analogue of Proposition \ref{prop:expect} now reads as follows.
\begin{proposition}
For all $g \in \mathcal{C}^b(\R), g \ge 0, g \not\equiv 0$, $h>0, \beta>0$, and $p$ with $Np^3 \to \infty$ we have
\begin{equation}\label{eq:E_tilde_asympt_magnetic}
\lim_{N \to \infty} \frac{\E \tilde Z_N(\beta, g)}{\eee^{\frac{(1-p)\beta^2}{8 p}(-m^4+1)-N I(m)+\beta h Nm} 2^{N+1}\frac{1}{\sqrt{1-m^2}} \sigma(\beta, h)  \E_{\xi} [g(\xi)]}=1,
\end{equation}
where $\xi$ denotes a normally distributed random variable with expectation $0$ and variance $\sigma^2(\beta,h).$
\end{proposition}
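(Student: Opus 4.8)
The plan is to follow the proof of Proposition~\ref{prop:expect} almost verbatim, the only change being that $\E T(\sigma)$ is now given by the preceding Lemma and hence carries the extra deterministic factor $\eee^{h\beta|\sigma|}$. As before I would decompose $\{\pm 1\}^N$ into the typical configurations $S_N^1=\{\sigma:\big|\,|\sigma|-Nm\,\big|\le\sqrt N\kappa_N\}$ (with the same $\kappa_N=p\sqrt N/(p^3N)^{2/5}$ and now $m=m^+(\beta,h)$) and the atypical ones, and estimate the two contributions separately. The one structural simplification compared with Theorem~\ref{theo:b>1} is that, since $h>0$ breaks the $\pm$ symmetry, there is no restriction to $|\sigma|>0$: the sum runs over all of $\{\pm 1\}^N$, and correspondingly the total mass of the limit is $1$ rather than $1/2$.

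For the typical part I would group the sum by $k=|\sigma|$, insert the Stirling asymptotics $2^{-N}\binom{N}{(N+k)/2}=(1+o(1))\sqrt{2/(\pi N(1-m^2))}\,\eee^{-NI(k/N)}$ uniformly on $W_{N,m}$, and substitute $k/N=m+c_k/\sqrt N$ with $|c_k|\le\kappa_N$. The new factor contributes $h\beta k=h\beta Nm+h\beta c_k\sqrt N$; the constant part $h\beta Nm$ is exactly the extra term $+\beta hNm$ appearing in the claimed normalizer in~\eqref{eq:E_tilde_asympt_magnetic}, while the part linear in $c_k$ must cancel. The total coefficient of $c_k$ becomes $\sqrt N\bigl(\beta m+h\beta-I'(m)\bigr)+o(1)$, and this vanishes because $I'(x)=\mathrm{artanh}(x)$ together with the fixed-point relation $m=\tanh(\beta(m+h))$, i.e.\ $\mathrm{artanh}(m)=\beta(m+h)$, which is the magnetic-field analogue of the identity $m=\tanh(\beta m)$ used in Proposition~\ref{prop:expect}. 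Since $h\beta k$ is linear in $k$ it does not affect the quadratic coefficient, so the Gaussian weight is again $\eee^{\frac{\beta-I''(m)}{2}c_k^2}=\eee^{-c_k^2/(2\sigma^2(\beta,h))}$, using $I''(m)=1/(1-m^2)$ and the definition~\eqref{eq:var_ext} of $\sigma^2(\beta,h)$. The Riemann-sum argument of Proposition~\ref{prop:expect} then identifies the typical contribution with the right-hand side of~\eqref{eq:E_tilde_asympt_magnetic}.

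It remains to show that the atypical configurations are negligible, and this is where the only genuinely new input is required. Following~\eqref{eq:Iaussen}--\eqref{eq:expectaussen2}, I would use that the tilted rate function $x\mapsto\frac{\beta}{2}x^2+h\beta x-I(x)$ attains its global maximum at $x=m^+(\beta,h)$, with a strict quadratic upper bound $\frac{\beta}{2N}k^2+h\beta k-NI(k/N)\le N\bigl(\frac{\beta}{2}m^2+h\beta m-I(m)\bigr)-K_1c_k^2$ away from $m$. The main subtlety is that the additional factor $\eee^{h\beta|\sigma|}$ grows exponentially in $|\sigma|$, so one must confirm that the entropy term $-NI(k/N)$ still dominates and produces genuine quadratic decay; what makes this cleaner than the $h=0$ case is that for $h>0$ the maximizer $m^+(\beta,h)$ is unique, so there is no competing contribution from negative magnetizations and no indicator is needed. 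Once the quadratic bound is secured, the estimate $\sum_{k\in W_{N,m}^c}\eee^{-K_2c_k^2}\le N\eee^{-K_2\kappa_N^2}\to 0$ (using $p\kappa_N^2\to\infty$) shows the atypical sum is of smaller order, completing the proof. I expect the verification of this uniform quadratic decay --- confirming that $m^+(\beta,h)$ is the unique global maximizer of the tilted rate function for every admissible pair $(\beta,h)$ --- to be the main obstacle.
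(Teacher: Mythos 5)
Your proposal matches the paper's own proof essentially step for step: the extra factor $\eee^{\beta h k}=\eee^{\beta h Nm}\eee^{\beta h\sqrt N c_k}$ absorbed into the normalizer, the cancellation of the linear term via $I'(m)=\mathrm{artanh}(m)=\beta(m+h)$, the unchanged Gaussian weight $\eee^{-c_k^2/(2\sigma^2(\beta,h))}$, and the atypical bound from the tilted rate function $\frac{\beta}{2}x^2+\beta h x-I(x)$ having its global maximum at $m^+(\beta,h)$ with a quadratic upper bound are exactly the ingredients the paper uses. The ``main obstacle'' you flag at the end is a standard fact from the Curie--Weiss large deviations analysis, which the paper likewise invokes without further proof, so your argument is complete in the same sense as the original.
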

\begin{proof}
We note that, in contrast to the case $h=0$, there is an additional term $e^{\beta hNm}$ in the denominator in the assertion of the proposition.  We proceed as in the proof of Proposition \ref{prop:expect}. The main difference is that $\mathbb ET(\sigma)$ has an supplementary factor $e^{\beta h |\sigma|}$, which results in an extra factor $e^{\beta h k}$ in  \eqref{eq:first step exp}. The remaining notation is the same as in the proof of Proposition \ref{prop:expect}. Again we use  $\frac kN= m+\frac{c_k}{\sqrt N}$ with $|c_k|\le \kappa_N$, i.e.
\begin{equation*}
e^{\beta h k} = \eee^{\beta h Nm}e^{\beta h \sqrt{N}c_k}.
\end{equation*}
Instead of \eqref{eq:coeff_c_k} we have
\begin{align}
&\sum_{\sigma \in S^1_N}g\left(\frac{|\sigma|-Nm}{\sqrt N}\right)\E T(\sigma) \nonumber\\
& \quad =
(1+o(1))\eee^{\frac{(1-p)\beta^2}{8 p}(m^4+1)-N I(m) + \beta h Nm} 2^N \frac{1}{\sqrt{1-m^2}} \sqrt{\frac {2} {\pi N} } \nonumber\\
&\qquad
\times \sum_{k\in W_{N,m}}
g\left(\frac{k-Nm}{\sqrt N}\right)\eee^{\frac{\beta}{2N}\left(2 N^{3/2}m c_k+c_k^2 N\right)-\frac{(1-p)\beta^2}{8 p}\frac{2m^4N^2+4m^3N^{3/2}c_k+2m^2c_k^2 N}{N^2} + \beta h \sqrt{N}c_k} \nonumber \\
&\qquad  \qquad \times  \eee^{-N I'(m) \frac{c_k}{\sqrt N}
- I''(m)\frac{c_k^2}2}. \nonumber
\end{align}
The linear term in $c_k$ in the exponent is
\begin{align*}
&c_k \left(\beta m \sqrt N -\frac{\beta^2 (1-p)m^3}{2p \sqrt N}-\sqrt N I'(m) + \beta h \sqrt{N} \right)\\
&=c_k \left(\beta m \sqrt N -\sqrt N I'(m) + \beta h \sqrt{N} \right) + o(1)

\\
& =o(1),
\end{align*}
where the first equality follows  from the definition of $\kappa_N$ and $Np^3 \to \infty$ and the second equality follows from
$
m^+(\beta, h)= \tanh(\beta (m^+ (\beta, h)+h)),$
which is the defining relation for $m^+(\beta, h)$.
The rest of the proof for typical spin configurations can  remain unchanged, except for the additional factor $e^{\beta h Nm}$, which now appears in the denominator in the assertion of the proposition.
For atypical spin configurations we get again an extra factor $e^{\beta h k}$ in the expansion of $\mathbb ET(\sigma)$, i.e.~instead of \eqref{eq:atyp_spin_2} we arrive at
\begin{align*}
&\sum_{\sigma \in S^{1c}_{N}}g\left(\frac{|\sigma|-Nm}{\sqrt N}\right)\E T(\sigma) \nonumber\\
\le& C ||g||_{\infty} \eee^{\frac{(1-p)\beta^2}{8 p}(m^4+1)}2^N \sum_{k:\atop
k^2 \in W_{N,m,0}^c}
\eee^{\frac{\beta}{2N}(k^2-m^2N^2)-\frac{m^4(1-p)\beta^2}{4 p}-\frac{2c_k (1-p)\beta^2}{4 p \sqrt N}m^3-NI(\frac kN)} \eee^{\beta h k}.
\end{align*}
The function $\frac{k}{N} \mapsto \frac{\beta}{2N}k^2-NI(\frac kN)+ \beta h k$ attains its maximum in $m^{+}(\beta, h)$
and hence
\begin{equation*}
\frac{\beta}{2N}k^2-NI(\frac kN) + \beta h k\le N\left(\frac{\beta}{2}m^2-I(m ) + \beta h m \right)-k_1 c_k^2,
\end{equation*}
for some $k_1>0$. The rest of the proof is completely analogous to the case $h =0$.
\end{proof}
In the expansion of $\E (T(\sigma)T(\tau))$ we get two extra terms, which leads immediately  to the following analogue of Lemma \ref{CovT}.
\begin{lemma}
For $h>0$, $\beta>0$, all $p=p(N)$ such that $p^3 N \to \infty$ and all $\sigma, \tau \in \{-1,+1\}^N$ we have
\begin{equation*}
\E (T(\sigma)T(\tau))= \exp(N^2 b_0 + b_1 |\sigma|^2 + b_2 |\tau|^2 + b_{12} |\sigma \tau|^2 + h \beta (|\sigma| + |\tau|))
\end{equation*}
with coefficients $b_0, b_1, b_2, b_{12}$ given in Lemma \ref{CovT}.
\end{lemma}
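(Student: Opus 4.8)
The plan is to isolate the contribution of the external field, which enters $T(\sigma)T(\tau)$ only through a deterministic prefactor. Writing out the product of the two exponentials, I would first record that
$$
T(\sigma)T(\tau) = \exp\!\left(\gamma\sum_{i,j=1}^N \vep_{i,j}(\sigma_i\sigma_j+\tau_i\tau_j) - 2\gamma m^2 \sum_{i,j=1}^N \vep_{i,j}\right)\exp\!\left(\beta h\sum_{i=1}^N(\sigma_i+\tau_i)\right),
$$
and that the second factor equals $\eee^{h\beta(|\sigma|+|\tau|)}$, a quantity that does not involve any of the edge variables $\vep_{i,j}$.

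Since this field factor is deterministic, it pulls out of the expectation, leaving
$$
\E(T(\sigma)T(\tau)) = \eee^{h\beta(|\sigma|+|\tau|)}\,\E\!\left[\exp\!\left(\gamma\sum_{i,j=1}^N \vep_{i,j}(\sigma_i\sigma_j+\tau_i\tau_j) - 2\gamma m^2 \sum_{i,j=1}^N \vep_{i,j}\right)\right].
$$
The remaining expectation is precisely the quantity treated in Lemma \ref{CovT}: the $\vep$-dependent part of $T$ coincides with the $h=0$ definition, so independence of the $\vep_{i,j}$ again produces the product $\prod_{i,j}(1-p+p\eee^{\gamma(\sigma_i\sigma_j+\tau_i\tau_j-2m^2)})$. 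Applying Lemma \ref{CovT} verbatim then yields $\exp(N^2 b_0 + b_1|\sigma|^2 + b_2|\tau|^2 + b_{12}|\sigma\tau|^2)$ with the coefficients stated there. The only change is that $m$ now denotes $m^+(\beta,h)$ rather than $m^+(\beta)$, but the functional dependence of the $b$'s on $m$, $p$, and $\gamma$ is identical, so Corollary \ref{coro_coefficients} continues to supply their expansions, with the same uniformity over $\sigma,\tau$.

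Multiplying the two factors back together gives the claimed identity. I expect essentially no obstacle here: the entire content is that the magnetic field enters $T$ additively and deterministically in the spins, hence it commutes with the expectation over the disorder and simply reappears as the prefactor $\eee^{h\beta(|\sigma|+|\tau|)}$. All of the genuinely analytic work — the power-series expansion of the coefficients and its uniformity — is inherited without modification from the $h=0$ computation in Lemma \ref{CovT}.
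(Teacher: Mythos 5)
Your proposal is correct and follows essentially the same route as the paper, which likewise treats the lemma as immediate from Lemma \ref{CovT}: the field contribution $\beta h\sum_{i=1}^N(\sigma_i+\tau_i)$ is deterministic in the disorder, so it factors out of the expectation as $\eee^{h\beta(|\sigma|+|\tau|)}$, and the remaining $\vep$-dependent factor is exactly the $h=0$ quantity computed in Lemma \ref{CovT}, now evaluated at $m=m^+(\beta,h)$. Your added remark that the expansions from Corollary \ref{coro_coefficients} apply unchanged with the same uniformity is a correct and slightly more explicit justification than the paper provides.
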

The proof of
\begin{equation*}
\V(\tilde Z_N(\beta, g))=o\left(\E^2[\tilde Z_N(\beta, g)]\right),
\end{equation*}
i.e.~of the result corresponding to Proposition \ref{prop:var}, as well as the rest of the proof is completely analogous to the case $h=0$, where in \eqref{eq:proof_th_1_1} $\xi$ is now a  normally distributed random variable with expectation $0$ and variance $\sigma^2(\beta,h)$. In particular, it follows that
\begin{equation}\label{eq:111}
\frac{\tilde Z_N(\beta, g)}{\E \tilde Z_N(\beta, g)} \to 1,
\qquad
\frac{\tilde Z_N(\beta)}{\E \tilde Z_N(\beta)} \to 1,
\end{equation}
in probability.

\section{Proof of Theorem \ref{theo:fluc_part}}
We give the proof for the two cases $h=0$, $\beta>1$ and $h>0$, $\beta>0$ simultaneously. The main ingredient for the fluctuations of $Z_N(\beta)$ is the relation (see \eqref{eq:relation_Z_tilde_and_Z})
\begin{equation*}
Z_N(\beta)= \tilde Z_N(\beta) \exp \left(  \gamma m^2 \sum_{i,j=1}^N \varepsilon_{i,j}\right)
\end{equation*}
together with
\begin{equation*}
\frac{ \tilde Z_N(\beta)}{\E \tilde Z_N(\beta)} \to 1 \quad \text{in probability};
\end{equation*}
see \eqref{eq:in_probab_noplus} for the case $h=0$, $\beta>1$, and~\eqref{eq:111} for the case $h>0$, $\beta>0$.
The term $\exp \left(  \gamma m^2 \sum_{i,j=1}^N \varepsilon_{i,j}\right)$ can easily be treated by the Central Limit Theorem.
\begin{proof}[Proof of Theorem \ref{theo:fluc_part}]
We have
\begin{equation*}
\log \frac{Z_N(\beta)}{\E \tilde Z_N(\beta)} = \log \frac{ \tilde Z_N(\beta)}{\E \tilde Z_N(\beta)}
 +  \gamma m^2 \sum_{i,j=1}^N \varepsilon_{i,j}.
\end{equation*}
Note that $\gamma  \sum_{i,j=1}^N \varepsilon_{i,j}$ is a sum of independent   random variables with
\begin{equation*}
\mathbb E\left( \gamma  \sum_{i,j=1}^N \varepsilon_{i,j} \right) = \gamma  p N^2= \frac{\beta}{2} N
\end{equation*}
and
\begin{equation*}
\mathbb V\left( \gamma  \sum_{i,j=1}^N \varepsilon_{i,j} \right)= \gamma^2 N^2 p(1-p) = \frac{\beta^2}{4p}(1-p).
\end{equation*}
Hence, by the Central Limit Theorem (which applies since $N^2p(1-p)\to\infty$) we have for $N \to \infty$
\begin{equation*}
\frac{\gamma m^2 \sum_{i,j=1}^N \varepsilon_{i,j}- m^2 \frac{\beta}{2} N}{\sqrt{m^4 \frac{\beta^2}{4p}(1-p)}} \to  \mathfrak N_{0,1}
\end{equation*}
in distribution. It follows that
$$
\frac{\log \frac{Z_N(\beta)}{\E \tilde Z_N(\beta)}- m^2 \frac{\beta}{2} N }{\sqrt{m^4 \frac{\beta^2}{4p}(1-p)}}
= \frac{\log \frac{ \tilde Z_N(\beta)}{\E \tilde Z_N(\beta)}}{\sqrt{m^4 \frac{\beta^2}{4p}(1-p)}}
+
\frac{\gamma m^2 \sum_{i,j=1}^N \varepsilon_{i,j}- m^2 \frac{\beta}{2} N}{\sqrt{m^4 \frac{\beta^2}{4p}(1-p)}}.
$$
The numerator of the first summand converges to $0$ in probability, while the denominator is bounded away from $0$ because $p$ is bounded away from $1$. It follows that the first term converges to $0$ in probability and hence
$$
\frac{\log \frac{Z_N(\beta)}{\E \tilde Z_N(\beta)}- m^2 \frac{\beta}{2} N }{\sqrt{m^4 \frac{\beta^2}{4p}(1-p)}} \to \mathfrak N_{0,1}
$$
in distribution. This completes the proof.
\end{proof}

\bibliographystyle{abbrv}
\bibliography{LiteraturDatenbank}
\end{document}